\date{}
\begin{document}

\title{Uniqueness and stability for the recovery of a time-dependent source in elastodynamics}

\author{Guanghui Hu  and  Yavar Kian\\
\footnotesize Beijing Computational Science Research Center, Beijing 100193, China\\ [-1mm]
\footnotesize Aix Marseille Univ, Universit\'e de Toulon, CNRS, CPT, Marseille, France\\ [-1mm]
\footnotesize E-Mail: hu@csrc.ac.cn \ \ yavar.kian@univ-amu.fr}

\maketitle


\newtheorem{thm}{Theorem}
\newtheorem{cor}[thm]{Corollary}
\newtheorem{lem}{Lemma}
\newtheorem{prop}[thm]{Proposition}
\newtheorem{defn}{Definition}
\newtheorem{rem}{Remark}
\numberwithin{equation}{section}
\newcommand{\norm}[1]{\left\Vert#1\right\Vert}
\newcommand{\abs}[1]{\left\vert#1\right\vert}
\newcommand{\To}{\longrightarrow}
\newcommand{\BX}{\mathbf{B}(X)}
\newcommand{\cB}{\mathcal{B}}
\newcommand{\cC}{\mathcal{C}}
\newcommand{\cD}{\mathcal{D}}
\newcommand{\re}{\mathfrak R}
\newcommand{\cK}{\mathcal{K}}
\newcommand{\cM}{\mathcal{M}}
\newcommand{\cN}{\mathcal{N}}
\newcommand{\cP}{\mathcal{P}}
\newcommand{\oo}{\boldsymbol{0}}
\newcommand{\ve}{\varepsilon}
\newcommand{\pp}{\pmb{|}}
\newcommand{\be}{\begin{eqnarray}}
\newcommand{\ben}{\begin{eqnarray*}}
\newcommand{\en}{\end{eqnarray}}
\newcommand{\enn}{\end{eqnarray*}}
\newcommand{\benn}{\begin{equation}}
\newcommand{\een}{\end{equation}}
\newcommand{\hx}{\hat{x}}
\newcommand{\al}{\alpha}
\newcommand{\ga}{\gamma}
\newcommand{\G}{\Gamma}
\newcommand{\Om}{\Omega}
\newcommand{\N}{{\mathbb{N}}}
\newcommand{\Z}{{\mathbb{Z}}}
\newcommand{\R}{{\mathbb{R}}}
\newcommand{\BS}{{\mathbb{S}}}
\newcommand{\Lam}{{\Lambda}}
\newcommand{\C}{{\mathbb{C}}}
\newcommand{\bx}{{\rm {x}}}
\newcommand{\supp}{{\rm {supp}~}}
\newcommand{\grad}{{\rm {grad}~}}
\newcommand{\Div}{{\rm {div}~}}
\newcommand{\curl}{{\rm {curl}~}}
\newcommand{\imaganary}{\mbox{Im}}
\newcommand{\real}{\mbox{Re}}
\newcommand{\tC}{{\tilde{C}}}
\definecolor{rot}{rgb}{0.000,0.000,0.000}
\newcommand{\tcr}{\textcolor{rot}}

\begin{abstract}
This paper is concerned with inverse source problems for the time-dependent Lam\'e system  in an unbounded domain  corresponding to  the exterior of a bounded cavity or the full space $\R^3$. If the time and spatial variables of the source term can be separated with compact support, we prove that the vector valued spatial source term can be uniquely determined by boundary Dirichlet data in the exterior of a given cavity. Uniqueness and stability for recovering some class of time-dependent source terms are also obtained using partial boundary data.


\vspace{.2in} {\bf Keywords:} Linear elasticity, inverse source problems, time domain, uniqueness, stability estimate.
\end{abstract}

\section{Introduction}
\subsection{Statement of the problem}
Consider the radiation of an elastic source $F$ outside a cavity $D$ described by the system
\be\label{llame1}\rho \partial_{tt}U(x,t)=\mathcal L_{\lambda,\mu} U(x,t)+F(x,t), \quad x=(x_1,x_2,x_3)\in \R^3\backslash{\overline{D}},\; t>0\en
where $\rho$ denotes the density, \tcr{$\mu$ and $\lambda$ the Lam\'e coefficients}, $U=(u_1,u_2,u_3)^\top$ the displacement vector, $D\subset\R^3$ the region of the cavity
and $\mathcal{L}_{\lambda,\mu}U$ the Lam\'e operator defined by
\be\label{lame2}
\mathcal{L}_{\lambda,\mu}U:=-\mu(x) \nabla\times\nabla\times U+
(\lambda(x)+2\mu(x))\nabla\nabla\cdot U+(\nabla\cdot U)\nabla\lambda(x)+((\nabla U)+(\nabla U)^T)\nabla\mu(x).
\en
Throughout the paper, it is supposed that \tcr{$\rho>0$ is a constant}
and $\mu,\lambda\in \mathcal C^3(\R^3)$ satisfy $\mu>0$, $\lambda\geq0$.
Further, the density function and the Lam\'e coefficients are supposed to be constants in $|x|>R$ for some sufficiently large $R>0$ such that $D\subset B_R:=\{x\in \R^3: |x|<R\}$.
 Together with the governing equation, we impose the initial conditions
\be\label{IBCs}
U(x,0)=0,\quad \partial_tU(x,0)=0,\quad & x\in \R^3\backslash{\overline{D}},
\en
 and the traction-free boundary condition on $\partial D$:
\be\label{stress1}
 \mathcal TU(x,t)=0,\quad& (x,t)\in \partial D\times \R^+,
\en
where $\mathcal T U$ is the stress boundary condition defined by \eqref{stress} (see Section 2). In this paper we consider the inverse problem  of determining the source term $F$ from knowledge of $U$ on the surface $\partial B_R=\{x\in \R^3: |x|=R\}$ with $R>0$ sufficiently large. According to \cite[Remark 4.5]{BHKY} there is an obstruction for the recovery of general time-dependent source terms $F$. Facing this obstruction we consider this problem for some specific type of source terms.
\subsection{Motivations}

We recall that the Lam\'e system \eqref{llame1}-\eqref{lame2} is frequently used for the study of linear elasticity and imaging problems. In this context our inverse problems can be seen as the recovery of an external force  provided by the source term $F$.
For instance, the recovery of  elastodynamics source
term $F$ corresponding to the product of a spatial function $g$ and a temporal function $f$  can be regarded as an approximation of the elastic pulse and are commonly used in modeling vibration phenomena in seismology and teleseismic inversion \cite{AR,S}. This type of sources has been also considered in numerous
applications in biomedical imaging (see the references \cite{Ammari13,Ammari} and the references therein) where our inverse problem can be seen as the recovery of the information provided by the parameter under consideration.

Let us also observe that the identification of time-dependent sources (see Theorems \ref{Th2:Uniqueness} and \ref{t2}) is associated to the recovery of a moving source which can be thought as an approximation of a pulsed signal transmitted by a moving antenna (see \cite{HKLZ} and the references therein for more details).
\subsection{Known results}

Inverse source problems are a class of inverse problems which have received many interest. These problems take different forms and have many applications (environment, imaging, seismology $\cdots$). For an overview of these problems we refer  to \cite{I}. Among the different arguments considered for solving these problems we can mention the approach based on applications of Carleman estimates arising from the work of
\cite{K1981} (see also \cite{Kh,Klibanov1992}). This approach has been applied successfully to hyperbolic equations by \cite{Ya99} in order to extend his previous work \cite{Ya95} to a wider class of source terms. More precisely, in \cite{Ya99} the author considered the recovery of source terms of the form $f(x)G(x,t)$, where $G$ is known, while in \cite{Ya95} the analysis of the author is restricted to source terms of the form $\sigma(t)f(x)$, with $\sigma$ known. More recently, the approach of \cite{Ya99} has been extended by \cite{JLY} to hyperbolic equations with time-dependent second order coefficients and to less regular coefficients by \cite{YZ}. We mention also the work of \cite{CY,KSS} using similar approach for inverse source problems stated for parabolic equations and the result of \cite{SU} proved by a combination of geometrical arguments and Carleman estimates. Concerning the Lam\'e system we refer to \cite{INY1998} where a uniqueness result has been stated for the recovery of time-independent source terms by mean of suitable Carleman estimate and we mention also the work of \cite{IY2005,IY05,Isa86} dealing with related problems as well as  \cite{BY}  where an inverse source problem for
Biot's equations has been considered. We refer  also to the recent work \cite{BHKY} where the recovery of a time-independent source term appearing in the Lam\'e system in all space has been proved from measurements outside the support of the source under consideration  as well as the work of \cite{JLLY} dealing with this inverse source problem for fractional diffusion equations. 

In all the above mentioned results the authors considered the recovery of time independent source terms (in other words, the spatial component of the source term). For the recovery of a source depending only on the time variable we refer to \cite{FK} where such problems has been considered for fractional diffusion equations and for the recovery of some class of sources depending on both space and time variable appearing in a parabolic equation on the half space, we refer to \cite[Section 6.3]{I}. For hyperbolic equations, we refer to \cite{DOT,RS} where the recovery of some specific time-dependent source terms have been considered. For Lam\'e systems, \cite[Theorem 4.2]{BHKY} seems to be the only result available in the mathematical literature where such a problem has been addressed for time-dependent source terms. The result of \cite[Theorem 4.2]{BHKY} is stated with source terms depending only on the time variable. To the best of our knowledge, except the result of \cite{DOT}, dealing with the recovery of discrete in time
sources, and the result of the present paper, there is no result in the mathematical literature treating the recovery of a source term depending on both space and time variables appearing in hyperbolic equations.

\subsection{Main results}

In the present paper we consider three inverse problems related to the recovery of the source term $F$. In our first inverse problems we assume that the cavity $D\neq\emptyset$ is a domain with $\mathcal C^3$ boundary $\partial D$, with connected exterior $\R^3\backslash\overline{D}$,  and  we consider source terms of the form
\be\label{source1}F(x,t)=f(t)h(x),\quad x\in\R^3\backslash\overline{D},\ t\in(0,+\infty),\en with $f$ a real valued function and $h=(h_1, h_2, h_3)^\top: \R^3\backslash\overline{D}\rightarrow \R^3$ a vector valued function. 
Choose $R>0$ sufficiently large such that $B_R$ also contains the support of $h$ (i.e., supp$(h)\subset B_R$). We assume that  $f\in L^2(0,+\infty)$ is compactly supported and  $h\in L^2(\R^3\backslash \overline{D})^3$.
Then, the  problem (\ref{llame1})-(\ref{stress1})  admits a unique solution
$$U\in \mathcal C^1([0,+\infty);L^2(\R^3\backslash D))^3\cap \mathcal C([0,+\infty);H^1(\R^3\backslash D))^3.$$
The proof of this result can be carried out by combining the elliptic regularity properties of  $\mathcal{L}_{\lambda,\mu}$ (see e.g., \cite[Chapters 4 and 10]{Mclean} and \cite[Chapter 5]{Hsiao}) with \cite[Theorem 8.1, Chapter 3]{LM1} and \cite[Theorem 8.2, Chapter 3]{LM1} (see also the beginning of Sections 4.1 and 4.2 for more details).
Our first inverse problem in the exterior of the cavity  can be stated as follows.

\textbf{Inverse Problem 1} (IP1): Assume that  $f$, $D$ are both known in advance. Determine the spatially dependent function $h$ from the radiated field $U$ measured on the surface $\partial B_R\times [0,T_1) $, $T_1\in(0,+\infty]$.

Below we give a confirmative answer to the uniqueness issue for IP1 in two different cases. For source terms with low regularity we obtain 


\begin{thm}\label{THH2} Let  $f\in \mathcal C^1([0,+\infty))$ satisfy $f(0)\neq 0$, $h\in H^1(\R^3\backslash \overline{D})^3$ and $F$ takes the form \eqref{source1}. Let also $\Omega:=B_R\backslash \overline{D}$ and let $d_j$, $j=1,2$, be the Riemannian distance within $\overline{\Omega}$ induced by  the metric $g_j$, where
$$g_1[x](v,v)=\frac{\rho|v|^2}{\mu(x)},\quad g_2[x](v,v)=\frac{\rho|v|^2}{2\mu(x)+\lambda(x)},\quad x\in\overline{\Omega},\ v\in\R^3.$$
Then, for \tcr{$$T_1>2\left(\underset{j=1,2}{\max}\ \left(\underset{x\in \overline{\Omega}}{\sup}\  d_j(x,\partial B_R)\right)\right),$$}  the boundary data $\{U(x,t):\ (x,t)\in \partial B_{R}\times(0,T_1)\}$  uniquely determine $h$.
\end{thm}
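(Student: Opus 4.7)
By linearity it is enough to show that if $U|_{\partial B_R\times(0,T_1)}=0$ then $h\equiv 0$. The approach is classical: first trade the source recovery for an observability problem via a Duhamel/Volterra argument enabled by $f(0)\neq 0$, and then prove the observability by a Carleman estimate for the elastodynamic operator whose geometry is dictated by the two wave speeds.

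For the reduction, introduce the auxiliary solution $W$ of the homogeneous Lam\'e system $\rho\partial_{tt}W=\mathcal L_{\lambda,\mu}W$ on $(\R^3\setminus\overline D)\times(0,+\infty)$ with $W(\cdot,0)=0$, $\partial_tW(\cdot,0)=h$ and $\mathcal TW=0$ on $\partial D$. Duhamel's formula yields
$$\rho\, U(x,t)=\int_0^t f(t-s)\,W(x,s)\,ds.$$
Differentiating once in $t$, the hypothesis $f(0)\neq 0$ recasts this as a Volterra equation of the second kind for $W$, uniquely solvable on $L^2(0,T_1)$; hence $U|_{\partial B_R\times(0,T_1)}=0$ forces $W|_{\partial B_R\times(0,T_1)}=0$. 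Because $\rho,\lambda,\mu$ are constant and $h$ vanishes in $\{|x|>R\}$, uniqueness for the exterior Cauchy problem with zero initial data and zero Dirichlet trace on $\partial B_R$ yields $W\equiv 0$ on $(\R^3\setminus\overline{B_R})\times(0,T_1)$, whence $\mathcal TW|_{\partial B_R\times(0,T_1)}=0$ as well. We are left with $W$ on the bounded annulus $\Omega=B_R\setminus\overline D$ satisfying the homogeneous Lam\'e system with $W(\cdot,0)=0$, $\mathcal TW|_{\partial D}=0$, and full vanishing Cauchy data on $\partial B_R\times(0,T_1)$.

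The observability step is the heart of the argument. I would apply a Carleman estimate of Imanuvilov--Yamamoto type for $\rho\partial_{tt}-\mathcal L_{\lambda,\mu}$ with weight of the form $\varphi(x,t)=e^{\gamma(\psi(x)-\beta(t-T_1/2)^2)}$, the spatial function $\psi$ being an envelope of the Riemannian distances $d_1(\cdot,\partial B_R)$ and $d_2(\cdot,\partial B_R)$ chosen so that $|\nabla\psi|_{g_j}\leq 1-\ve$ on $\overline\Omega$ for $j=1,2$. The parameter $\beta>0$ is then tuned so that, because $T_1/2>T^*:=\max_{j=1,2}\sup_{\overline\Omega}d_j(\cdot,\partial B_R)$, the sub-level set $\{\varphi>c\}$ can be made to contain $\overline\Omega\times\{T_1/2\}$ while its boundary lies inside $\overline\Omega\times\{0,T_1\}$. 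The Carleman inequality---after absorbing the boundary contributions on $\partial B_R$ through the vanishing Cauchy data and those on $\partial D$ through the traction-free condition---gives $W\equiv 0$ near $t=T_1/2$, and the standard energy identity for the Lam\'e system propagates this to $\Omega\times(0,T_1)$. Evaluating $\partial_tW$ at $t=0$ concludes that $h=0$.

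The main obstacle is the construction of the spatial weight $\psi$. Unlike the scalar wave equation with a single characteristic speed, $\mathcal L_{\lambda,\mu}$ is a vectorial operator whose principal part carries two distinct characteristic metrics, $g_1$ for shear and $g_2$ for compressional waves; a single $\psi$ must therefore be pseudo-convex with respect to both simultaneously, which is exactly what forces $T^*$ to be the maximum of the two Riemannian diameters rather than either one in isolation. Controlling the coupling terms $(\nabla\cdot W)\nabla\lambda$ and $((\nabla W)+(\nabla W)^\top)\nabla\mu$ as lower-order perturbations---and ensuring compatibility of the Carleman weight with the mixed boundary conditions on $\partial D$ and $\partial B_R$---constitutes the main technical difficulty.
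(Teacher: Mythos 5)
Your reduction step is essentially the paper's: the paper likewise introduces the auxiliary solution $V$ of the homogeneous system with $\partial_tV(\cdot,0)=h$, writes $U$ as the time convolution $f*V$, differentiates in $t$ and uses $f(0)\neq 0$ (via a Gronwall/Volterra argument) together with exterior well-posedness to conclude that $V$ and $\mathcal TV$ vanish on $\partial B_R\times(0,T_1)$. Up to that point your plan is sound.

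The gap is in the observability step, which you propose to prove by a Carleman estimate and explicitly leave unresolved (``the main obstacle is the construction of the spatial weight $\psi$''). This is not a routine technicality that can be deferred: (i) a weight satisfying $|\nabla\psi|_{g_j}\leq 1-\ve$ for both metrics, combined with the usual pseudoconvexity requirement $\beta<1$ in $\varphi=e^{\gamma(\psi-\beta(t-T_1/2)^2)}$, propagates vanishing strictly slower than the characteristic speeds, so this route would prove the theorem only for some $T_1$ strictly larger than $2\max_j\sup_x d_j(x,\partial B_R)$ — it cannot reach the sharp threshold stated in the theorem, which is precisely what a Tataru-type theorem for operators with time-independent coefficients delivers; (ii) Carleman estimates for the coupled, two-speed Lam\'e operator with a traction-free (Neumann-type) condition on the inner boundary $\partial D$ are not available off the shelf in the generality needed here (the Imanuvilov--Yamamoto estimates you allude to require decoupling into divergence/curl components or extra boundary observations), so absorbing the $\partial D$ boundary terms is itself an open difficulty in your scheme. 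The paper avoids both issues by invoking the unique continuation result of Eller--Isakov--Nakamura--Tataru together with the global Holmgren theorem of Eller--Toundykov for the Lam\'e system, which yield vanishing of $V$ on $\Omega\times(T_2,T_2+2\epsilon)$ at the optimal time, after which backward well-posedness of the exterior initial-boundary value problem gives $V\equiv0$ and hence $h=\partial_tV(\cdot,0)=0$. As written, your argument establishes the reduction but not the theorem; to close it you would either need to prove the two-metric Carleman estimate with sharp time and traction-free boundary (a substantial independent result), or replace that step by the cited unique continuation and global Holmgren theorems.
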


By considering measurements for all time ($t\in(0,+\infty)$), we can remove the condition $f(0)\neq0$ in the following way.

\begin{thm}\label{TH1} Let  $f\in H^1_0(0,T)$, $h\in H^1(\R^3\backslash \overline{D})^3$ and  $F$ takes the form \eqref{source1}.
Then the boundary data $\{U(x,t):\ (x,t)\in \partial B_R\times\R^+\}$ uniquely determine $h$.
\end{thm}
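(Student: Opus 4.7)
The plan is to reduce the problem to an observability statement for the homogeneous Lam\'e system via Duhamel's principle and the Titchmarsh convolution theorem, and then to invoke the same unique continuation machinery that underlies Theorem \ref{THH2}. By linearity it suffices to show that $U|_{\partial B_R\times\R^+}=0$ forces $h\equiv 0$; assuming $f\not\equiv 0$ (otherwise there is nothing to prove), let $G(x,t)$ denote the solution in $\R^3\setminus\overline{D}$ of
\[
\rho\partial_{tt}G=\mathcal{L}_{\lambda,\mu}G,\quad G(\cdot,0)=0,\quad \partial_t G(\cdot,0)=\rho^{-1}h,\quad \mathcal{T}G|_{\partial D}=0.
\]
Duhamel's principle gives $U(x,t)=\int_0^t G(x,t-\tau)f(\tau)\,d\tau$; restricting to $x\in\partial B_R$, the hypothesis becomes the convolution identity $(f*G(x,\cdot))(t)=0$ for every $t>0$. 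Since $f$ is compactly supported in $(0,T)$ and not identically zero, the Titchmarsh convolution theorem forces $G(x,t)=0$ for every $x\in\partial B_R$ and every $t>0$.

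I next upgrade this Dirichlet vanishing to full Cauchy data on $\partial B_R$. Since $\mathrm{supp}(h)$ is a compact subset of the open ball $B_R$, there exists $\delta>0$ with $\mathrm{supp}(h)\subset B_{R-\delta}$, so that both initial data of $G$ vanish in $\{|x|\ge R-\delta\}$. Combined with the zero Dirichlet condition on $\partial B_R$ just obtained, uniqueness for the constant-coefficient exterior Lam\'e Cauchy problem yields $G\equiv 0$ in $\{|x|\ge R\}\times\R^+$. Because $\partial_t G(\cdot,0)=\rho^{-1}h\in H^1$, the solution $G$ lies in $C(\R^+;H^2_{loc})$ away from $\partial D$, so its traction is continuous across $\partial B_R$; therefore $\mathcal{T}G=0$ on $\partial B_R\times\R^+$ as a trace from the interior as well.

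Thus in $\Omega=B_R\setminus\overline{D}$, $G$ satisfies the homogeneous Lam\'e system with full Cauchy data $G=\mathcal{T}G=0$ on $\partial B_R\times\R^+$ and the traction-free condition on $\partial D\times\R^+$. I would invoke the Tataru-type unique continuation principle (Carleman estimate) for the elastic wave equation with the two Riemannian metrics $g_1,g_2$ entering Theorem \ref{THH2}. Because measurements are available for all $t>0$, the propagation cone can be chosen arbitrarily large: for any $T>2\max_{j=1,2}\sup_{x\in\overline{\Omega}}d_j(x,\partial B_R)$ one obtains $G(\cdot,T/2)=\partial_t G(\cdot,T/2)=0$ in $\Omega$. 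Backward well-posedness of the corresponding initial--boundary value problem (Dirichlet on $\partial B_R$, traction-free on $\partial D$) then gives $G\equiv 0$ in $\Omega\times\R^+$, and evaluating $\partial_t G(\cdot,0)=\rho^{-1}h$ yields $h\equiv 0$. The main obstacle is this unique continuation step, which requires a Carleman estimate for the vector-valued Lam\'e operator compatible with the traction-free condition and with the two distinct characteristic speeds; this is the same technical ingredient that drives Theorem \ref{THH2}. Its availability here compensates for dropping the hypothesis $f(0)\neq 0$: the Duhamel--Titchmarsh reduction replaces the initial acceleration $\rho^{-1}f(0)h$ that appears in Theorem \ref{THH2} by the automatically non-vanishing initial velocity $\rho^{-1}h$ of $G$, at the price of requiring data for all $t>0$.
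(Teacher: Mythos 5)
Your proof is correct and follows the same overall architecture as the paper's: reduce via Duhamel to the free evolution $G$ whose initial velocity is proportional to $h$, deconvolve to conclude that $G$ vanishes near $\partial B_R$ for all time, and finish with the unique continuation/global Holmgren argument of \cite{EINT,ET} plus backward uniqueness, exactly as at the end of the proof of Theorem \ref{THH2}. The one step where you genuinely diverge is the deconvolution. The paper restricts the identity $f*V=0$ to the exterior shell $B_{R_1}\setminus\overline{B_R}$, takes the Laplace transform in $t$ (which requires the polynomial-in-time energy bound of Proposition \ref{p4} in the appendix to make $\hat V$ well defined and holomorphic on $\{z:\re z>0\}$), and then uses holomorphy of $\hat f$ together with injectivity of the Laplace transform. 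You instead invoke the Titchmarsh convolution theorem on $\partial B_R$: since $f$ is compactly supported and nontrivial, $f*G(x,\cdot)\equiv 0$ on $[0,\infty)$ forces $G(x,\cdot)\equiv 0$. This buys independence from any a priori growth estimate on $G$ (Proposition \ref{p4} becomes unnecessary), at the cost of importing Titchmarsh's theorem, which should be applied in its Banach-space-valued form --- e.g.\ to $t\mapsto\langle G(\cdot,t)|_{\partial B_R},\varphi\rangle$ for test functions $\varphi$ --- since $G(\cdot,t)$ on the sphere is only an $H^{3/2}$ trace, not a pointwise-defined function of $x$. Your ordering (vanish on $\partial B_R$ first, then propagate to the exterior by uniqueness of the exterior Dirichlet problem to recover the vanishing traction) mirrors, in reverse, the paper's step of first showing $U\equiv 0$ outside $B_R$; both are legitimate. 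One small caveat, shared with the paper: the argument needs $f\not\equiv 0$, and when $f\equiv 0$ the data carry no information about $h$ whatsoever, so ``otherwise there is nothing to prove'' is the wrong gloss --- the nontriviality of $f$ is an implicit hypothesis of the theorem rather than a harmless degenerate case.
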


For our last inverse problem, we consider the Lam\'e system with constant density and Lam\'e coefficients when the embedded cavity is absent ($D=\emptyset$). We assume here that $F$ takes the form
\be\label{source2}F(\tilde{x},x_3,t)=g(x_3)\,f(\tilde{x},t),\quad \tilde{x}\in\R^2,\ x_3\in\R,\ t\in(0,+\infty),\en
 where the vectorial function $f=(f_1, f_2, 0)^\top$ is compactly supported on
$\tilde{B}_R\times [0, T)$ and the scalar function $g$ is supported in $(-R, R)$ for some $R>0$. Here $\tilde{x}=(x_1, x_2)\in \R^2$ for $x=(x_1,x_2,x_3)\in \R^3$ and $\tilde{B}_R$ denotes the set $\tilde{B}_R:=\{\tilde{x}\in\R^2:\ |\tilde{x}|<R\}$.
Then our last inverse problem can be stated as follows.\\
\textbf{Inverse Problem 2} (IP2): Assume that  $g$ is known in advance.  Determine the time and space dependent function  $f$ from  the radiated field $U$ measured on the surface $\Gamma\times(0,T_1) $, with $T_1>0$, $R_1>0$ sufficiently large and $\Gamma\subset\partial B_{R_1}$  an  open set with positive Lebesgue measurement.\\

In this paper we give a positive answer to (IP2) both in terms of uniqueness and stability. Our uniqueness result can be stated as follows.

\begin{thm}\label{Th2:Uniqueness} Assume $D=\emptyset$ and $\rho,\lambda,\mu$ are all constants in $\R^3$.
Assume that $F$ takes the form \eqref{source2} with $f=(f_1,f_2,0)^\top\in H^1(0,T; L^2(\R^2))^3$, $g\in L^2(-R, R)$ is non-uniformly vanishing and
 $$f( \tilde{x},0)= 0,\quad\tilde{x}\in\R^2.$$  Let $R_1>\sqrt{2}R$, $T_1>T+\frac{2R_1\sqrt{\rho}}{\sqrt{\mu}}$ and let $\Gamma\subset\partial B_{R_1}$ be an arbitrary open set with positive Lebesgue measurement. Then the source $f$ can be uniquely determined by the data $U(x, t)$ measured on $\Gamma\times (0, T_1)$.
\end{thm}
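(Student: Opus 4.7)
The plan is to reduce, by linearity, to showing that $U\equiv 0$ on $\Gamma\times(0,T_1)$ implies $f\equiv 0$, then to pass to the frequency domain and finish by testing $\hat U$ against complex plane-wave solutions of the homogeneous Lam\'e equation. First I extend the vanishing to all times. The source $F$ is supported in the closed cylinder $K:=\overline{\tilde B_R}\times[-R,R]$ in space and in $[0,T]$ in time, and the Lam\'e fundamental solution in $\R^3$ has time-support, at any fixed point $x$, contained in the interval $[|x-y|/c_P,\,|x-y|/c_S]$ with $c_P=\sqrt{(\lambda+2\mu)/\rho}$ and $c_S=\sqrt{\mu/\rho}$ (sharp $P$- and $S$-wave fronts plus the near-field term supported between the two cones). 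Convolution with $F$ therefore yields $U(x,t)=0$ for $t>T+\sup_{y\in K}|x-y|/c_S$. Since $\sup_{y\in K}|x-y|\leq R_1+R\sqrt 2<2R_1$, this trailing bound is strictly smaller than $T_1$ by hypothesis, and combined with the assumed vanishing on $(0,T_1)$ I get $U\equiv 0$ on $\Gamma\times\R_+$. The Fourier transform in $t$, $\hat U(x,\omega):=\int_\R e^{i\omega t}U(x,t)\,dt$, is then well-defined and entire in $\omega$, satisfies
\[(\mathcal{L}_{\lambda,\mu}+\rho\omega^2)\hat U(\cdot,\omega)=-g(x_3)\hat f(\tilde x,\omega)\quad\text{in }\R^3,\]
with Kupradze's outgoing radiation condition at infinity, and $\hat U(\cdot,\omega)|_\Gamma\equiv 0$ for every $\omega\in\R$.

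Second, I propagate this vanishing outside $K$. In $\R^3\setminus K$ the equation is homogeneous with constant coefficients, so $\hat U(\cdot,\omega)$ is real-analytic. Because $R_1>\sqrt 2\,R$, the sphere $\partial B_{R_1}$ is a connected real-analytic submanifold contained in $\R^3\setminus K$, and the restriction of $\hat U$ to it is real-analytic and vanishes on the open subset $\Gamma$; hence it vanishes on all of $\partial B_{R_1}$. A Rellich-type uniqueness theorem for the outgoing exterior problem for the reduced Lam\'e system (proved by Helmholtz decomposition into scalar $P$- and $S$-wave Helmholtz equations) gives $\hat U(\cdot,\omega)\equiv 0$ in $\{|x|>R_1\}$, and analytic continuation in the connected set $\R^3\setminus K$ extends this to $\hat U(\cdot,\omega)\equiv 0$ throughout $\R^3\setminus K$. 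Thus $\hat U(\cdot,\omega)$ is compactly supported in $K$.

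Third, I extract $f\equiv 0$ by testing against complex plane-wave solutions of the free Lam\'e equation. For $k=(\xi_1,\xi_2,k_3)\in\C^3$ and $\mathbf{a}\in\C^3$, the function $\varphi(x)=\mathbf{a}\,e^{ik\cdot x}$ solves $(\mathcal{L}_{\lambda,\mu}+\rho\omega^2)\varphi=0$ in either the $P$-regime $\{k_1^2+k_2^2+k_3^2=\omega^2/c_P^2,\ \mathbf{a}=k\}$ or the $S$-regime $\{k_1^2+k_2^2+k_3^2=\omega^2/c_S^2,\ \mathbf{a}\cdot k=0\}$. Since $\mathcal{L}_{\lambda,\mu}+\rho\omega^2$ is formally self-adjoint and $\hat U(\cdot,\omega)$ has compact support, integration by parts without boundary terms yields
\[0=\int_{\R^3}\hat U\cdot(\mathcal{L}_{\lambda,\mu}+\rho\omega^2)\varphi\,dx=-\tilde g(-k_3)\,\bigl(a_1\check f_1(-\xi,\omega)+a_2\check f_2(-\xi,\omega)\bigr),\]
where $\xi=(\xi_1,\xi_2)$, $\tilde g(\zeta):=\int_{-R}^{R}g(x_3)e^{-i\zeta x_3}\,dx_3$ (entire in $\zeta$ and not identically zero because $g\not\equiv 0$) and $\check f_j(\eta,\omega):=\int_{\R^2}e^{-i\eta\cdot \tilde x}\hat f_j(\tilde x,\omega)\,d\tilde x$. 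Selecting, for each fixed $\xi$, the $P$-polarization $\mathbf{a}=k$ (so $(a_1,a_2)=(\xi_1,\xi_2)$) and the $S$-polarization $\mathbf{a}=(\xi_2,-\xi_1,0)$ (perpendicular to $k$ and compatible with $f_3=0$) produces the system
\[\tilde g(-k_3^P)\bigl(\xi_1\check f_1(-\xi,\omega)+\xi_2\check f_2(-\xi,\omega)\bigr)=0,\qquad \tilde g(-k_3^S)\bigl(\xi_2\check f_1(-\xi,\omega)-\xi_1\check f_2(-\xi,\omega)\bigr)=0,\]
with $(k_3^{P,S})^2=\omega^2/c_{P,S}^2-|\xi|^2$. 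Since the zeros of the entire function $\tilde g$ are discrete, for a.e.\ $\xi\in\R^2$ both prefactors are nonzero and the $2\times 2$ coefficient matrix has determinant $-|\xi|^2\neq 0$, which forces $\check f_1(-\xi,\omega)=\check f_2(-\xi,\omega)=0$. Fourier inversion in $\tilde x$ and then in $t$ gives $f\equiv 0$.

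The main obstacle is the identification step: compact support of $\hat U(\cdot,\omega)$ by itself cannot force the source to vanish, since for any $\chi\in C_c^\infty(\R^3)^3$ the source $(\mathcal{L}_{\lambda,\mu}+\rho\omega^2)\chi$ produces the outgoing solution $\chi$, which is also compactly supported. The argument only closes because of the combined use of the product structure $F=g(x_3)f(\tilde x,t)$, the constraint $f_3=0$, and the existence of \emph{two} independent families of plane-wave polarizations (the $P$- and $S$-modes) of the free Lam\'e operator, which together yield exactly the two linearly independent scalar equations needed to identify the two-component function $(f_1,f_2)$.
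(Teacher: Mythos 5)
Your proposal is correct and follows essentially the same route as the paper: strong Huygens principle to truncate in time, Fourier transform in $t$, real-analyticity of $\hat U$ near the sphere to pass from $\Gamma$ to all of $\partial B_{R_1}$, exterior (Rellich/Kupradze) uniqueness, and then testing against plane-wave solutions of the homogeneous Lam\'e system in the two polarizations so that the product structure $g(x_3)f(\tilde x,t)$ factors and the holomorphy of $z\mapsto\int g\,e^{zx_3}dx_3$ removes the $g$-dependent prefactor. The only (cosmetic) differences are that the paper restricts to evanescent test functions with $|\xi|>k_s$ and phrases the final step via the Hodge decomposition $f=(\nabla_{\tilde x}f_p,0)^\top+(\nabla_{\tilde x}^\perp f_s,0)^\top$ together with the orthogonality $\nabla_{\tilde x}^\perp\cdot V_p=0$, whereas you obtain the equivalent nondegenerate $2\times2$ system for $(\check f_1,\check f_2)$ directly and work with Betti's formula replaced by the compact support of $\hat U(\cdot,\omega)$.
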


By assuming that $\Gamma=\partial B_{R_1}$, we can extend this uniqueness result to a log-type stability estimate taking the form.
For this purpose, we need a priori information on the regularity and upper bound of the source terms $f$ and $g$.

\begin{thm}\label{t2} Let  $R_1>\sqrt{2}R$, \tcr{$T_1>T+\frac{2R_1\sqrt{\rho}}{\sqrt{\mu}}$},  $\rho,\lambda,\mu$ be constant and assume that $D=\emptyset$,  $f\in H^3(\R^2\times\R)^3\cap H^4(0,T;L^2(\R^2))^3$ satisfies $$f(\tilde{x},0)=\partial_t f(\tilde{x},0)=\partial_t^2 f(\tilde{x},0)=\partial_t^3 f(\tilde{x},0)=0,\quad \tilde{x}\in\R^2.$$ Assume also that $g$ is non-uniformly vanishing with a constant  sign \emph{($g\geq0$ or $g\leq0$)} and that there exists $M>0$ such that
\begin{equation}\label{t2a} \norm{f}_{H^3(\R^2\times\R)^3}+\norm{f}_{H^4(0,T;L^2(\R^2))^3}\leq M.\end{equation}
Then, there exists $C>0$ depending on $M$, $R_1$, $\rho$, $\lambda$, $\mu$, $T_1$, $\norm{g}_{L^1(\R)}$ such that
\begin{equation}\label{t2b}
||f||_{L^2((0,T)\times\tilde{B}_{R})}\leq  C\left(\norm{U}_{H^3(0,T_1;H^{3/2}(\partial B_{R_1}))^3}+\left|\ln\left(\norm{U}_{H^3(0,T_1;H^{3/2}(\partial B_{R_1}))^3}\right)\right|^{-1}\right).\end{equation}
\end{thm}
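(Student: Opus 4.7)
The plan is to combine a conditional frequency-by-frequency stability estimate with the a priori bound~\eqref{t2a} by means of a high-low frequency splitting, which is the standard route to log-type stability in severely ill-posed inverse problems.

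First, thanks to $f(\tilde x,0)=\partial_t f(\tilde x,0)=\partial_t^2 f(\tilde x,0)=\partial_t^3 f(\tilde x,0)=0$ and the temporal support of $f$ in $[0,T]$, I would extend $f$ by zero to $\R^2\times\R$ and take Fourier transforms in $t$, $\tilde x$ and $x_3$. Since $\rho,\lambda,\mu$ are constant, the Lam\'e operator becomes algebraic in Fourier variables and its symbol diagonalizes via Helmholtz decomposition into a compressional mode at speed $c_p=\sqrt{(\lambda+2\mu)/\rho}$ and shear modes at speed $c_s=\sqrt{\mu/\rho}$. The product structure $F=g(x_3)f(\tilde x,t)$ with $f_3\equiv0$ then yields an explicit algebraic relation between $\hat U(\xi,\xi_3,\omega)$ and $\hat g(\xi_3)\hat f(\xi,\omega)$.

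Next, using the elastodynamic Green tensor of $\rho\partial_t^2-\mathcal L_{\lambda,\mu}$ one obtains a representation formula for $U|_{\partial B_{R_1}}$ as a convolution with $F$. Taking Fourier/Laplace transforms of this representation and exploiting analyticity provided by Paley--Wiener (from compactness of the supports of $g$ and $f$), I would derive an estimate of the form
\begin{equation*}
\abs{\hat f(\xi,\omega)}\leq Ce^{c(\abs\xi+\abs\omega)}\,\mathcal M(U),
\end{equation*}
where $\mathcal M(U)$ is a norm of the data controlled by $\norm{U}_{H^3(0,T_1;H^{3/2}(\partial B_{R_1}))^3}$. The time window $T_1>T+2R_1\sqrt{\rho/\mu}$ ensures by finite speed of propagation at the slower shear speed $c_s$ that these measurements carry the full information on $U|_{\partial B_{R_1}\times\R^+}$. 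Then, for a cutoff $\Lambda>0$, splitting
\begin{equation*}
\norm{f}_{L^2((0,T)\times\tilde B_R)}^2=\int_{\abs\xi^2+\omega^2\leq\Lambda^2}\abs{\hat f}^2+\int_{\abs\xi^2+\omega^2>\Lambda^2}\abs{\hat f}^2,
\end{equation*}
one bounds the low-frequency part by $Ce^{c\Lambda}\mathcal M(U)^2$ via Plancherel and the preceding estimate, and the high-frequency part by $CM^2\Lambda^{-s}$ for some $s>0$ using~\eqref{t2a}; choosing $\Lambda\sim\abs{\ln\mathcal M(U)}$ then yields~\eqref{t2b}.

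The main obstacle will be the boundary-to-source estimate with explicit control of the frequency-dependent constant. The sign condition $g\geq0$ (or $\leq0$) together with $g\not\equiv 0$ ensures $\hat g(0)=\int g\neq0$, so $\abs{\hat g}$ is bounded below on a neighbourhood of the origin, which is the key quantitative input in the inversion; propagating this positivity outside that neighbourhood via the entire-function structure granted by Paley--Wiener is where the exponential-in-frequency loss, and hence the logarithmic character of the final estimate, enters. Handling the vector-valued nature of the Lam\'e system (so that the inversion of the symbol $\rho\omega^2-\hat{\mathcal L}(\xi,\xi_3)$ respects the $f_3\equiv0$ constraint and the observation on a sphere rather than a plane) will require some careful bookkeeping but should not change the global scheme.
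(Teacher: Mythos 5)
Your global architecture (a frequency-domain bound with exponential loss, followed by a high--low frequency splitting and the choice $\Lambda\sim\abs{\ln\mathcal M(U)}$) is indeed the skeleton of the paper's argument, but the central step is asserted rather than proved, and as stated it is not achievable. A Green-tensor/Paley--Wiener representation of $U|_{\partial B_{R_1}}$ does \emph{not} yield a pointwise bound $\abs{\hat f(\xi,\omega)}\le Ce^{c(\abs{\xi}+\abs{\omega})}\mathcal M(U)$ for \emph{all} $(\xi,\omega)$: at a fixed temporal frequency $\omega$ the boundary data of the resulting Helmholtz-type problem leave the non-radiating part of the source invisible, and the direct argument (the paper tests the equation against the exponential solutions $V_p,V_s$, which exist only in the evanescent regime $\abs{\xi}^2>k_p^2=\omega^2\rho/(\lambda+2\mu)$, resp. $\abs{\xi}^2>k_s^2=\omega^2\rho/\mu$) produces the estimate \eqref{es} only on the cone $\{\abs{\xi}>k_s(\omega)\}$. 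The heart of the proof is then to propagate this bound from that cone to a full ball $B_r$ in $(\xi,\omega)$-space; the paper does this with a quantitative analytic continuation result from measurable sets (Proposition \ref{p2}, from \cite{AEWZ}), applied to the rescaled function $H_r(\xi,\omega)=\hat f_p(r(\xi,\omega))$, at the cost of an extra factor $e^{3(1-b)r}$ and a H\"older exponent $b\in(0,1)$. Without this (or an equivalent quantitative unique continuation in the frequency variables) your scheme stalls exactly where the severe ill-posedness lives, and no choice of cutoff $\Lambda$ can rescue it.

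Two further points. First, the sign condition on $g$ is exploited differently and more efficiently in the paper: for real exponents, $g$ of constant sign gives $\abs{\int_{-R}^{R}g(x_3)e^{sx_3}\,dx_3}\ge \norm{g}_{L^1}e^{-R\abs{s}}$ at \emph{every} accessible frequency, which is exactly matched against the exponential growth of the test functions; your plan to use only $\hat g(0)=\int g\neq0$ near the origin and then continue analytically would introduce a second, uncontrolled loss. Second, the ``bookkeeping'' you defer is structural: one needs the Hodge decomposition $f=(\nabla_{\tilde x}f_p,0)^\top+(\nabla_{\tilde x}^\perp f_s,0)^\top$ with separate compressional and shear test functions to decouple the system, a trace estimate bounding $\mathcal T U$ on $\partial B_{R_1}$ by $U$ itself (Proposition \ref{p3}) so that the data norm is the one appearing in \eqref{t2b}, and a final interpolation step to pass from $L^2$ bounds on $f_p,f_s$ back to $f$, which is where the rate degrades from $\abs{\ln(\cdot)}^{-2}$ to $\abs{\ln(\cdot)}^{-1}$.
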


\subsection{Comments about our results}

Let us first remark that to the best of our knowledge Theorems  \ref{THH2} and \ref{TH1} are the first results of recovery of source terms  stated for the Lam\'e system outside a cavity with variable coefficients. Indeed, it seems that all other known results have been stated on a bounded domain (e.g. \cite{INY1998}) or in the full space $\R^3$ (e.g. \cite{BHKY}). We emphasize that Theorems \ref{THH2} and \ref{TH1} are valid even if the embedded cavity is unknown. In fact, the unique determination of the embedded cavity can be proved following Isakov's arguments \cite[Theorem 5.1]{Isakov} by applying the unique continuation results of \cite{EINT, ET}; see also the proof of Theorem \ref{THH2}. We refer also to \cite{Isakov08} for the determination of other impenetrable scatterers for the wave equation with a single measurement data. The main purpose of this paper is concerned with the identification of elastic sources in an unbounded domain. 

Let us observe that in  Theorem  \ref{THH2},  we manage to restrict our measurements to a finite interval of time. However, like in \cite{INY1998,SU,Ya99,YZ}, we need to impose the additional condition $f(0)\neq0$ for a source term $F$ of the form  \eqref{source1}. In contrast to Theorem  \ref{THH2} and results using Carleman estimates like \cite{INY1998,SU,Ya99,YZ}, in Theorem  \ref{TH1} we state our result without assuming that the source under consideration is non-vanishing at $t=0$. For a source term $F$ of the form  \eqref{source1}, such assumption will be equivalent to the requirement that $f(0)\neq0$. From the practical point of view, this means that the results of  \cite{INY1998,SU,Ya99,YZ}, as well as Theorem  \ref{TH1}, can only be applied to the determination of a source term associated with a phenomenon which has appeared before the beginning of the measurement. This restriction excludes applications where one wants to determine a phenomenon  with measurements that start before its appearance. By removing this restriction in Theorem \ref{TH1} we make our result more suitable for applications in that context. The approach of Theorem \ref{THH2}, \ref{TH1} consist in  transforming our problem into the recovery of initial condition. Then, applying some results of unique continuation and global Holmgren theorem borrowed from \cite{EINT,ET} we complete the proof of Theorem \ref{THH2}, \ref{TH1}.

To the best of our knowledge,  even for a bounded domain, Theorems \ref{Th2:Uniqueness} and \ref{t2} seem to be the first results of unique and stable recovery of some general source term depending on both time and space variables appearing in a hyperbolic equation. Indeed, it seems that only results dealing with recovery of source terms depending only on the time variable (see \cite{BHKY,RS}) or space variable (see \cite{BHKY,INY1998,SU,Ya99,YZ}) are available in the mathematical literature with the exception of \cite{DOT} where the recovery of discrete in time sources  has been considered. Therefore the results of Theorems \ref{Th2:Uniqueness} and \ref{t2} are not only new for the Lam\'e system but also more general for hyperbolic equations. We mention also that the stability result of Theorem  \ref{t2} requires a result of stability in the unique continuation already considered by \cite{Ba,CK,Ki1} for the recovery of time-dependent coefficients. Note also that, in contrast of Theorems \ref{TH1} and  \ref{THH2}, thanks to the strong Huygens principle we can state Theorems \ref{Th2:Uniqueness} and \ref{t2} at finite time.

In Corollary \ref{cor1}  we prove that the results  of  Theorem \ref{TH1}  can be reformulated in terms of partial recovery of the source term  from measurement on a subdomain where the source term or the initial data are known. This situation may for instance occur in several applications where the source under consideration has large support and the data considered in Theorem \ref{TH1}  is not accessible. What we prove in Corollary \ref{cor1}  is that even in such context one can expect recovery of partial information of the source term under consideration by measurements located on some subdomain where the source is known.

Both Theorems \ref{THH2}, \ref{TH1} and Corollary \ref{cor1} remain valid if the cavity $D$ is absent
 or if $D$ is a rigid elastic body (i.e., $U$ vanishes on $\partial D$). All the results of this paper can be applied to the wave equation. Actually, the proof for the wave equation will be easier in several aspects and the particular treatment for the Lam\'e system leads to some  difficulties inherent to this type of systems (see for instance the proof of Theorems \ref{Th2:Uniqueness} and \ref{t2}).
\subsection{Outline}

This paper is organized as follows. In Section 2 we study the inverse problem (IP1). More precisely, we prove Theorems \ref{THH2} and \ref{TH1} as well as the Corollary \ref{cor1}. In Section 3 we treat the inverse problem (IP2). We start with the uniqueness result stated in Theorem \ref{Th2:Uniqueness}. Then, we extend this result by proving the  stability estimate stated in Theorem \ref{t2}. We give also some results related to solutions of the problem \eqref{llame1}-\eqref{stress1} in the appendix.

\section{Inverse source problem with traction-free boundary condition}
This section is devoted to the proofs of Theorems \ref{THH2} and \ref{TH1} for our inverse problem (IP1).
More precisely, we consider the radiation of an elastic source in an inhomogeneous medium in the exterior of a cavity $D$ (see Figure 1):
\be\label{lame}
&&\rho \partial_{tt}U(x,t)=\mathcal L_{\lambda,\mu} U(x,t)+f(t)\,g(x), \quad x=(x_1,x_2,x_3)\in \R^3\backslash{\overline{D}},\; t\in(0,+\infty),
\en
where $\mathcal{L}_{\lambda,\mu}U$ stands for the Lam\'e operator given by \eqref{lame2}.

\begin{figure}[!ht]
  \centering
  \includegraphics[width=0.4\textwidth]{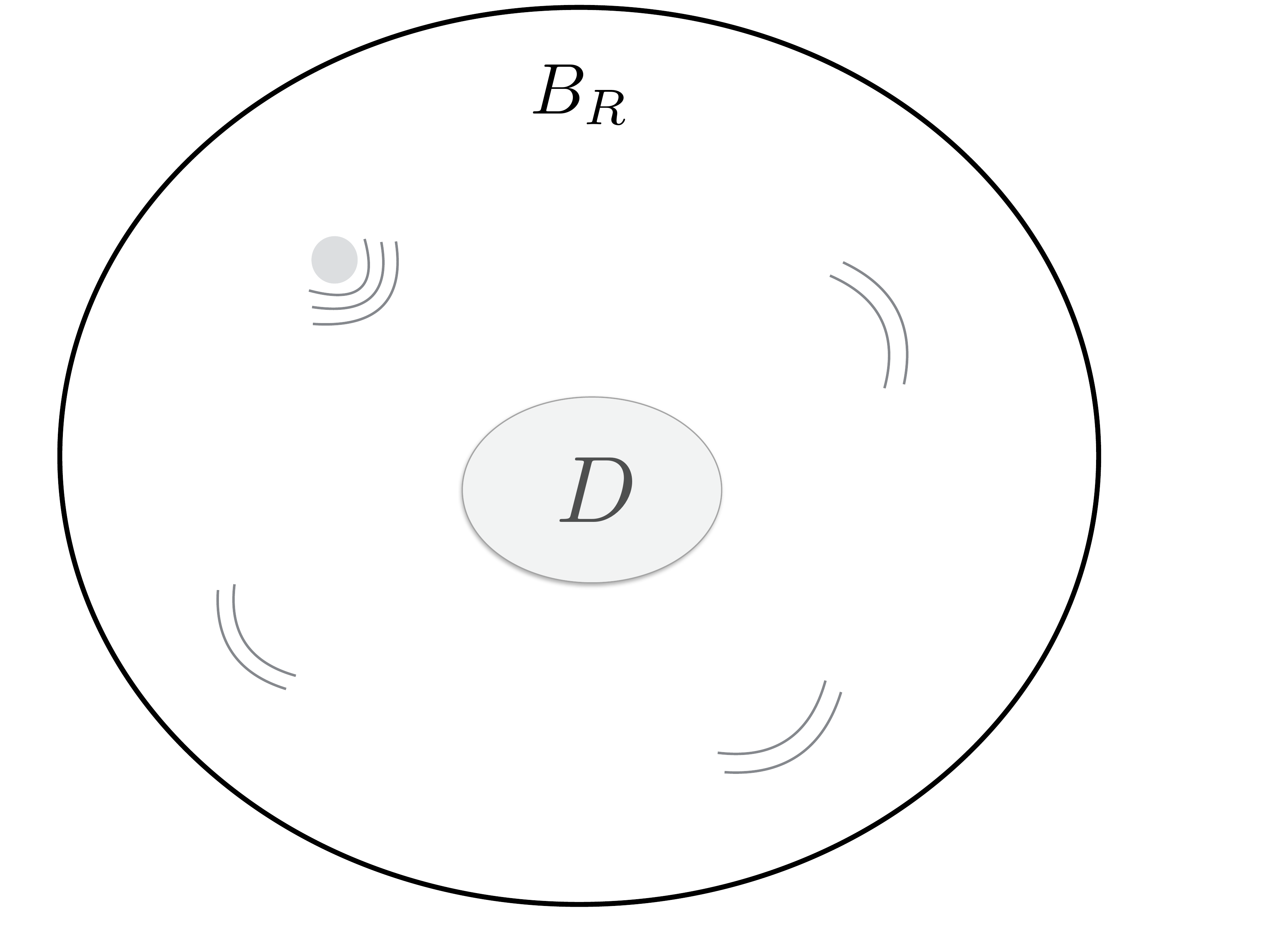}
  \caption{Radiation of a source in an inhomogeneous isotropic elastic medium in the exterior of a cavity. Suppose that the cavity $D$ is known. The inverse problem is to determine the source term from the data measured on $\partial B_R=\{x\in \R^3: |x|=R\}$.  }
  \label{fig1}
\end{figure}

Together with the governing equation (\ref{lame}), we fix the initial conditions at $t=0$:
\be\label{II}
U(x,0)=0,\quad \partial_tU(x,0)=0,\quad & x\in \R^3\backslash{\overline{D}},
\en
 and the traction-free boundary condition $\mathcal T U$ on $\partial D$ given by
\be\label{stress}
 \mathcal T U:=\sigma(U)\nu=0\quad& \mbox{on}\quad \partial D\times \R^+,
\en
where $\nu=(\nu_1,\nu_2,\nu_2)$ stands for the unit normal direction pointing into the exterior of $D$ and the stress tensor $\sigma (U)$ is given by
\be\label{E}
\sigma(U):=\lambda\; \Div U\,\textbf{I}_{3}+2\mu E(U),\quad E(U):=\frac{1}{2} ((\nabla U)+(\nabla U)^T).
\en
Note that $\textbf{I}_{3}$ means the 3-by-3 unit matrix and that the
conormal derivative $\sigma(U)\nu$ corresponds to
the \emph{stress vector} or \emph{surface traction} on $\partial D$.
With these notation the Lam\'e operator (\ref{lame2}) can be written as $\mathcal{L}_{\lambda,\mu} U=\Div \sigma(U)$.

We suppose that $D\subset\R^3$ is a bounded domain with $\mathcal{C}^3$-smooth boundary $\partial D$ and with connected exterior $\R^3\backslash\overline{D}$.
 If the cavity $D$ is absent (i.e., $D=\emptyset$) and the background medium is homogeneous and isotropic, it was shown in \cite{BHKY} via strong Huygens principle and Fourier transform that the boundary data of Theorem \ref{TH1} can be used to uniquely determine $g$. According to \cite{Ka}, in the context of Theorem \ref{TH1}, the strong Huygens principle is not  valid and we can not even expect integrable local energy decay. For this purpose, we   use a different approach based on application of Laplace transform for Theorem \ref{TH1} and unique continuation properties for Theorem \ref{THH2}.

Let us first consider the proof of Theorem \ref{THH2} stated with non-vanishing sources at $t=0$.

\textbf{Proof of Theorem \ref{THH2}.}
 Assuming $U(x,t)=0$ for $|x|=R$ and $t\in (0,T_1)$, we need to prove that $h\equiv 0$. Since ${\rm Supp}(h)\subset B_R$, the wave field $U$ fulfills the homogeneous initial and boundary conditions of the Lam\'e system in the exterior of $B_R$:
\begin{equation}\label{dd}\left\{\begin{array}{lll}
\rho\partial_{tt}U(x,t)-\mathcal{L}_{\lambda,\mu} U(x,t)=0\quad&\mbox{in}\quad \R^3\backslash\overline{B}_R\times (0,T_1),\\
U(x,0)=\partial_tU(x,0)=0\quad  &\mbox{in}\quad \R^3\backslash\overline{B}_R, \\
U(x,t)=0\quad &\mbox{on}\quad \partial B_R\times \R^+.
\end{array}\right.\end{equation}
Applying the elliptic regularity properties of  $\mathcal{L}_{\lambda,\mu}$ (see e.g., \cite[Chapters 4 and 10]{Mclean} and \cite[Chapter 5]{Hsiao}) and the results of \cite[Theorem 8.1, Chapter 3]{LM1}, \cite[Theorem 8.2, Chapter 3]{LM1} (see also the beginning of Sections 4.1 and 4.2 for more details), one can prove the unique solvability of the initial boundary value problem \eqref{dd}. Consequently, we deduce that
$U\equiv 0$ in $(\R^3\backslash\overline{B}_R)\times [0,T_1)$.

Let us now consider the initial boundary value problem
\begin{equation}\label{eeq1}\left\{\begin{array}{ll}\rho\partial_t^2V+\mathcal{L}_{\lambda,\mu} V=0,\quad &(x,t)\in(\R^3\backslash \overline{D})\times(0,+\infty),\\  V(\cdot,0)=0,\quad \partial_tV(\cdot,0)=h,\quad &\textrm{in}\ \R^3\backslash \overline{D},\\ \mathcal T V(x,t)=0,\quad& (x,t)\in\partial D\times(0,+\infty).\end{array}\right.\end{equation}
Analogous to the boundary value problem (\ref{dd}), one can prove that  this exterior problem admits a unique solution $V\in \mathcal C([0,T];H^2(\R^3\backslash D)^3)\cap \mathcal C^1([0,T];H^1(\R^3\backslash D)^3)$.  Moreover, one can easily check that the solution $U$ to (\ref{lame}) is connected with $V$ via (which is well-known as Duhamel's principle)
\begin{equation}\label{l4a}U(x,t)=\int_0^tf(t-s)V(x,s)ds,\quad t\in(0,+\infty),\;x\in \R^3\backslash\overline{D},\end{equation}
Combining  \eqref{l4a} with the fact that $U\equiv 0$ in $(\R^3\backslash\overline{B}_R)\times [0,T_1)$, we deduce that
$$\int_0^tf(t-s)V(\cdot,s)_{|\R^3\backslash B_R}ds=0,\quad t\in[0,T_1).$$
Using the fact that $f\in\mathcal C^1([0,T])$, we can differentiate this expression with respect to $t$ in order to get
$$
f(0)V(\cdot,t)_{|\R^3\backslash B_R}+\int_0^tf'(t-s)V(\cdot,s)_{|\R^3\backslash B_R}ds=0,\quad t\in[0,T_1).$$
Combining this with the fact that $f(0)\neq 0$, we obtain
$$ \norm{V(\cdot,t)}_{L^2(\R^3\backslash B_R)}\leq \frac{\norm{f}_{\mathcal{C}^1(0,T_1)}}{|f(0)|}\left(\int_0^t\norm{V(\cdot,s)}_{L^2(\R^3\backslash B_R)}ds\right),\quad t\in[0,T_1).$$
Therefore, applying the Gronwall inequality, we deduce that
\begin{equation}\label{l4b}V(x,t)=0,\quad t\in[0,T_1),\quad x\in \R^3\backslash B_R.\end{equation}
From this, we deduce that
$$V(x,t)=\mathcal T V(x,t)=0,\quad x\in \partial B_R,\ t\in[0,T_1).$$
Combining this with the fact that \tcr{$T_1>2\left(\underset{j=1,2}{\max}\ \left(\underset{x\in \overline{\Omega}}{\sup}\  d_j(x,\partial B_R)\right)\right)$}, we deduce that there exist
 $$\tcr{T_2>\left(\underset{j=1,2}{\max}\ \left(\underset{x\in \overline{\Omega}}{\sup}\  d_j(x,\partial B_R)\right)\right)},\quad\epsilon>0,$$
such that
\begin{equation}\label{l4c}V(x,t)=\mathcal T V(x,t)=0,\quad x\in \partial B_R,\ t\in[0,2T_2+4\epsilon].\end{equation}

Combining the unique continuation result of \cite[Theorem 5.5]{EINT} with the global Holmgren theorem stated in \cite[Theorem 1.2]{ET} and repeating arguments similar to \cite[Theorem 3.2]{ET} (see also the properties of Lam\'e system recalled at the beginning of  Section 3.2 of \cite{ET} as well as  \cite[Remark 3.5]{ET}), we deduce that \eqref{l4c} implies
$$V(x,t)=0,\quad x\in B_R\backslash \overline{D},\ t\in (T_2,T_2+2\epsilon).$$
Combining this with \eqref{l4b}, we get
$$V(x,t)=0,\quad x\in \R^3\backslash \overline{D},\ t\in (T_2,T_2+2\epsilon)$$
and differentiating with respect to $t$, we get
$$V(x,T_2+\epsilon)=\partial_tV(x,T_2+\epsilon)=0,\quad x\in \R^3\backslash \overline{D}.$$
Therefore, $V$ restricted to $(\R^3\backslash \overline{D})\times (0,T_2+\epsilon)$ solves the initial boundary value problem
$$\left\{\begin{array}{ll}\rho\partial_t^2V+\mathcal{L}_{\lambda,\mu} V=0,\quad &(x,t)\in(\R^3\backslash \overline{D})\times(0,T_2+\epsilon),\\  V(\cdot,T_2+\epsilon)=0,\quad \partial_tV(\cdot,T_2+\epsilon)=0,\quad &\textrm{in}\ \R^3\backslash \overline{D},\\ \mathcal T V(x,t)=0,\quad& (x,t)\in\partial D\times(0,T_2+\epsilon).\end{array}\right.$$
The uniqueness of the solution of this problem implies that
$$V(x,t)=0,\quad x\in \R^3\backslash \overline{D},\ t\in(0,T_2+\epsilon)$$
from which we deduce that $h\equiv 0$.\qed

Now let us consider Theorem \ref{TH1}, where we allow $f(0)=0$ but we make measurements for all time.

\textbf{Proof of Theorem \ref{TH1}.} Assuming $U(x,t)=0$ for $|x|=R$ and $t\in \R^+$, we need to prove that $h\equiv 0$. Repeating the arguments used at the beginning of Theorem \ref{THH2}, one can check that
$$U(x,t)=0,\quad t\in[0,+\infty),\quad x\in \R^3\backslash B_R.$$
Then, repeating the arguments used in  Theorem \ref{THH2} with some minor modifications, we can prove that
\begin{equation}\label{TH1d}\int_0^tf(t-s)V(\cdot,s)_{|\R^3\backslash B_R}ds=0,\quad t\in[0,+\infty),\end{equation}
with $V$ the solution of \eqref{eeq1}. Since $f(t)\equiv 0$ in $t<0$, the identity (\ref{TH1d}) can be rewritten as
\begin{equation}\label{THd}
f(t)*V(\cdot, t)_{|\R^3\backslash B_R}=\int_0^\infty f(t-s)V(\cdot,s)_{|\R^3\backslash B_R}ds=0,\quad t\in[0,+\infty)\end{equation}
 where the operator $*$ denotes the convolution.
For $R_1>R$, we fix  $\Omega_1:=B_{R_1}\backslash \overline{B_R}$. Using standard idea for deriving energy
estimates, one can prove that $t\mapsto\norm{V(\cdot,t)}_{H^1(\Omega_1)}$ has a long time  behavior  which is at most of polynomial type
(see Proposition \ref{p4} in the Appendix ).
This allows us to define the Laplace transform of $t\mapsto V(\cdot,t)_{|\Omega_1}$ with respect to the time variable as following:
\ben
\hat{V}(x,\tau):=\int_{\R} V(x,t)\, e^{-\tau\,t}\,dt,\quad \tau>0,\quad x\in \Omega_1,
\enn
and $z\mapsto\hat{V}(\cdot, z)_{|\Omega_1}$ is an holomorphic function on $\mathbb C_+:=\{z\in\mathbb C:\ \re z>0\}$ taking values in $H^1(\Omega_1)^3$. Therefore applying the laplace transform to both sides of \eqref{THd}, we get
\begin{equation}\label{TH1e}\hat{f}(\tau)\hat{V}(x,\tau)=0,\quad x\in\Omega_1,\ \tau>0.\end{equation}
Using the fact that $f\in L^1(\R_+)$ is supported in $[0,T]$ and it does not vanish identically, we deduce that the function $\hat{f}$ is holomorphic in $ \C$ and not identically zero. Thus,  there exists an interval $I\subset (0,+\infty)$ such that $|\hat{f}(\tau)|>0$ for $\tau\in I$. Combining this with \eqref{TH1e}, we deduce that
$$\hat{V}(x,\tau)=0,\quad x\in\Omega_1,\ \tau\in I$$
and using the fact that $z\mapsto\hat{V}(\cdot, z)_{|\Omega_1}$ is an holomorphic function on $\mathbb C_+:=\{z\in\mathbb C:\ \re z>0\}$, we deduce that
$$\hat{V}(x,\tau)=0,\quad x\in\Omega_1,\ \tau>0.$$
Then,  the injectivity of the Laplace transform, implies
$$V(x,t)=0,\quad x\in\Omega_1,\ t>0.$$
Combining this with the arguments used at the end of Theorem \ref{THH2}, we deduce that $h\equiv0$.
\qed

We remark that surface data are utilized in the proof of
 Theorem \ref{TH1}. As a corollary, we prove that interior volume observations can also be used to extract partial information of the spatial source term.
Below we consider again the  problem (\ref{lame})-(\ref{stress}), with $f, g$ being given as in Theorem \ref{TH1}.

 \begin{cor}\label{cor1} Suppose that $f$ is given and let $\Omega$ be a $\mathcal C^3$-smooth connected open set of $\R^3$  satisfying
 $D\subset \Omega\subset B_R$ for some $R>0$. Let $\omega$ be an open set of $\R^3$.
Then the wave fields $U$ measured on the volume
 $\omega\times\R^+$ and on the surface  $\partial\Omega\times\R^+$ (see figure \ref{fig2}) uniquely determine $h|_{\Omega}$.
\end{cor}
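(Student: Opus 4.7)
The proof follows the scheme of Theorem \ref{TH1}, adapted to this alternative measurement configuration. By linearity, it suffices to show that if the solution $U$ of \eqref{lame}--\eqref{stress} with source $f(t)h(x)$ vanishes on $\omega\times\R^+$ and on $\partial\Omega\times\R^+$, then $h|_{\Omega}\equiv 0$. In the configuration of Figure \ref{fig2} one may assume $\omega\subset\Omega\setminus\overline{D}$.

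The first part of the plan is to reproduce the Duhamel/Laplace reduction used in Theorem \ref{TH1}. Introduce the auxiliary field $V$, the solution of the \emph{homogeneous} Lam\'e system in $(\R^3\setminus\overline{D})\times\R^+$ with $V(\cdot,0)=0$, $\partial_tV(\cdot,0)=h$ and $\mathcal{T}V=0$ on $\partial D$, so that by Duhamel's principle
$$U(x,t)=\int_0^t f(t-s)V(x,s)\,ds.$$
The hypothesis then reads $f*V|_{\omega\cup\partial\Omega}=0$. Using Proposition \ref{p4} in the Appendix to bound the growth of $V$ on bounded sets, the Laplace transform in time of $V$ is well defined and holomorphic on $\mathbb{C}_+$; combining this with the analyticity and non-vanishing of $\hat{f}$ on some open subset of $(0,+\infty)$ and the injectivity of the Laplace transform (exactly as in Theorem \ref{TH1}), I would deduce
$$V(x,t)=0\quad\text{for every }(x,t)\in(\omega\cup\partial\Omega)\times\R^+.$$

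The second part of the plan is to exploit the restriction of $V$ to the bounded domain $\Omega\setminus\overline{D}$. There, $V$ solves the homogeneous Lam\'e system with zero initial displacement, initial velocity $h|_{\Omega\setminus\overline{D}}$, Dirichlet datum $V=0$ on $\partial\Omega$, traction-free datum on $\partial D$, and in addition the overdetermined interior vanishing $V=0$ on $\omega\times\R^+$. The conclusion $h|_{\Omega}\equiv 0$ will then follow directly, since $h$ is supported in $\R^3\setminus\overline{D}$ and $h|_{\Omega\setminus\overline{D}}=\partial_tV(\cdot,0)$.

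\textbf{The main obstacle} is this last step, namely the propagation of the vanishing of $V$ from the overdetermined set $(\omega\cup\partial\Omega)\times\R^+$ to the entire cylinder $(\Omega\setminus\overline{D})\times\R^+$. I would address it along the same lines as the end of the proof of Theorem \ref{THH2}, by invoking the unique continuation result \cite[Theorem 5.5]{EINT} together with the global Holmgren theorem \cite[Theorem 1.2]{ET} for the Lam\'e system; alternatively, one may pass to the Laplace variable, observe that $\hat{V}(\cdot,\tau)$ satisfies the elliptic problem $(\rho\tau^2-\mathcal{L}_{\lambda,\mu})\hat{V}=\rho h$ in $\Omega\setminus\overline{D}$ with $\hat{V}$ vanishing on $\omega$ and on $\partial\Omega$ and $\mathcal{T}\hat{V}=0$ on $\partial D$, and then conclude either via unique continuation of $\hat{V}$ from the open set $\omega$ or via a spectral expansion of $h$ in the eigenbasis of the Lam\'e operator on $\Omega\setminus\overline{D}$ (where analyticity of $\hat V$ in $\tau$ and non-vanishing of each eigenfunction on open sets force each Fourier coefficient of $h$ to vanish). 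Either route yields $V\equiv 0$ on $(\Omega\setminus\overline{D})\times\R^+$, and hence $h|_{\Omega}\equiv 0$.
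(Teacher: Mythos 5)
Your proposal is correct and follows essentially the same route as the paper: Duhamel's principle and the Laplace transform (with the polynomial growth bound of Proposition \ref{p4}) reduce the hypothesis to $V=0$ on $(\omega\cup\partial\Omega)\times\R^+$ for the auxiliary field $V$ of \eqref{eeq1}, after which the unique continuation result of \cite[Theorem 5.5]{EINT} combined with the global Holmgren theorem of \cite[Theorem 1.2]{ET} yields the vanishing of the Cauchy data of $V$ at some time $T_3$ throughout $\Omega$, and backward uniqueness for the mixed Dirichlet/traction problem on $\Omega$ gives $V\equiv0$ and hence $h|_\Omega=\partial_tV(\cdot,0)|_\Omega=0$. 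The paper does not pursue your alternative elliptic/spectral argument in the Laplace variable, but your primary route is the one it takes.
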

\begin{figure}[!ht]
  \centering
  \includegraphics[width=0.5\textwidth]{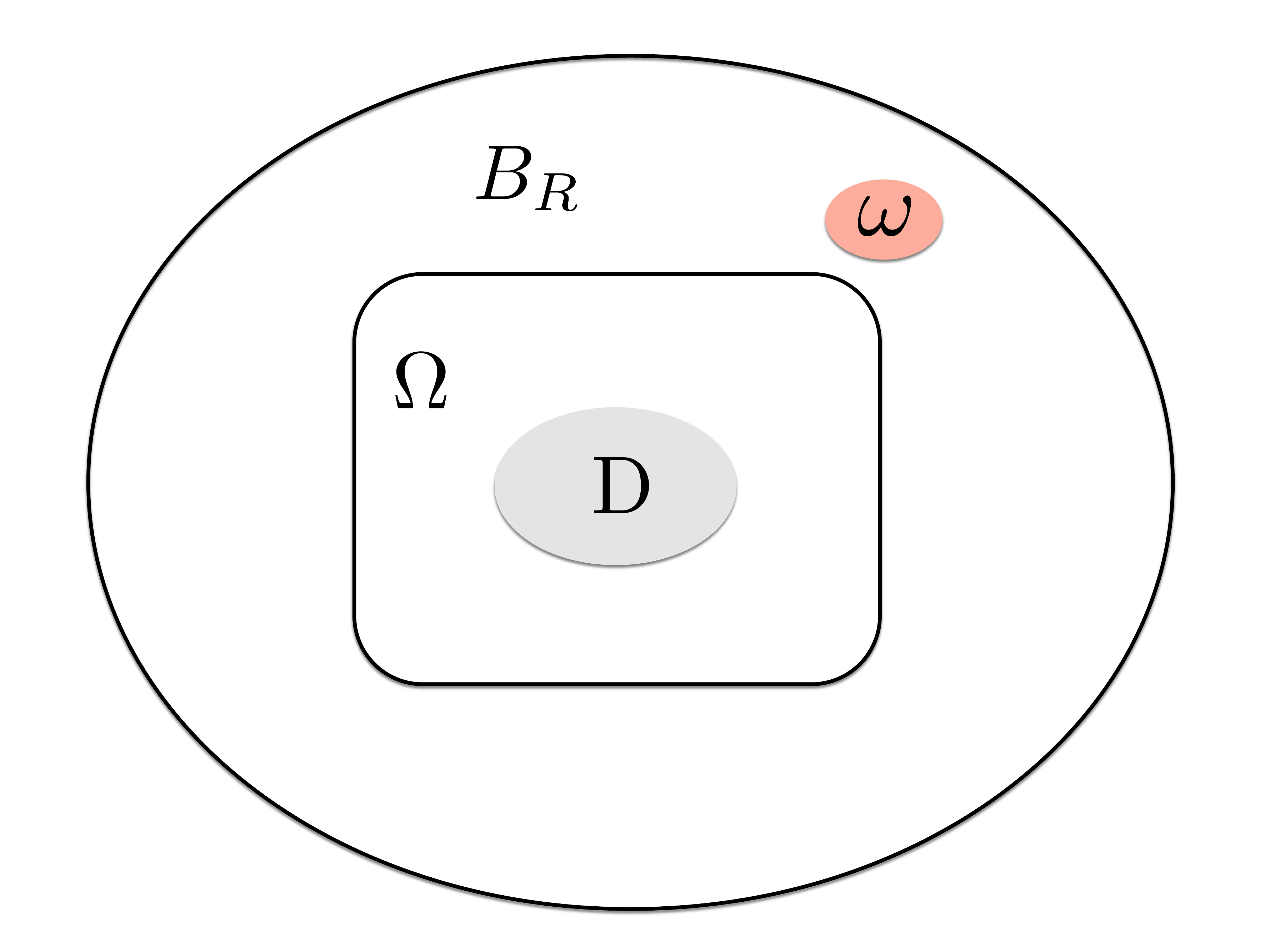}
  \caption{Suppose that  the data are collected on $\omega$ and on $\partial\Omega$. The inverse problem is to determine the value of $g$ on $\Omega$. }
  \label{fig2}
\end{figure}

\begin{proof}  We need to prove that the condition $U(x,t)=0$ for $(x,t)\in(\omega\times\R^+)\cup (\partial\Omega\times\R^+)$ implies that $h|_{\Omega}\equiv 0$. Repeating the arguments used in Theorem \ref{TH1}, for $V$ the solution of \eqref{eeq1}, we have
$$V(x,t)=0,\quad x\in\omega\cup\partial\Omega ,\ t>0.$$
In a similar way to Theorem \ref{THH2}, combing  the unique continuation result of \cite[Theorem 5.5]{EINT} with the global Holmgren theorem stated in \cite[Theorem 1.2]{ET}, we deduce that the condition
$$V(x,t)=0,\quad x\in\omega ,\ t>0$$
implies that there exists $T_3>0$ such that
$$V(x,T_3)=\partial_tV(x,T_3)=0,\quad x\in\Omega.$$
Combining this with the fact that
$$V(x,t)=0,\quad x\in\partial\Omega ,\ t>0,$$
we deduce that the restriction of $V$ to  $\Omega\times\R^+$ solves the problem
\begin{equation}\label{c1b}\left\{\begin{array}{ll}\rho\partial_t^2V+\mathcal{L}_{\lambda,\mu} V=0,\quad &(x,t)\in\Omega\times(0,T_3),\\  V(\cdot,T_3)=0,\quad \partial_tV(\cdot,T_3)=0,\quad &\textrm{in}\ \Omega,\\V(x,t)=0,\quad& (x,t)\in\partial \Omega\times(0,T_3)\\  \mathcal TV(x,t)=0,\quad& (x,t)\in\partial D\times(0,T_3).\end{array}\right.\end{equation}
Then, the uniqueness of the solution of this problem implies that
$$V(x,t)=0,\quad x\in\Omega,\ t\in[0,T_3].$$
In particular, we obtain $h(x)=\partial_tV(x,t)|_{t=0}=0$ in $\Omega$.
\end{proof}

\begin{rem} Corollary \ref{cor1} shows that, the volume observation data on $\omega$ and the surface measurements on $\partial\Omega$ unique determine the source term $h$ on $\Omega$. This gives partial information of $h$ only. However,
in the special case that ${\rm Supp}(h)\subset \Omega$ $($for instance, $\Omega=B_R$$)$, one may deduce from Corollary \ref{cor1} that
 $h$ can be uniquely determined by the data of $U$ on $\omega\times\R^+$.\end{rem}

\begin{rem} Assuming that $f(0)\neq0$, in a similar way to Theorem \ref{THH2}, we can restrict the measurements in Corollary \ref{cor1} to a finite time depending on the coefficients $\rho,\lambda,\mu$ and the domain $\Omega$, $\omega$.\end{rem}

\section{Determination of the source term $g(x_3) f(\tilde{x},t) $}
In the previous section, we established uniqueness of recovering a spatial source term in an inhomogeneous background medium with or without embedded obstacles.
However, the dependance of the source term on time and spatial variables
are completely separated. The counterexamples constructed in \cite{BHKY}
show that it is impossible to recover general source terms of the form $F(x,t)$
from the boundary observation on $\partial B_R\times(0,\infty)$.
This implies that a priori information on the source term is always necessary in proving uniqueness.
In this section we restrict our discussions to the inverse problem (IP2) for alternative source terms of
the form $g(x_3)f( \tilde{x},t) $, where the vectorial function $f=(f_1, f_2, 0)$ is compactly supported on
$[0, T)\times \tilde{B}_R$ and the scalar function $g$ is supported in $(-R, R)$ for some $R>0$. We recall that here $\tilde{x}=(x_1, x_2)\in \R^2$ for $x=(x_1,x_2,x_3)\in \R^3$, and $\tilde{B}_R:=\{\tilde{x}: |\tilde{x}|\leq R\}$.

For simplicity, we assume in this section that $D=\emptyset$ and the background medium is homogeneous with constant
Lam\'e coefficients $\lambda$, $\mu$ and a constant density function $\rho$. Below we shall consider the initial value problem
\be\label{Lame2}\left\{\begin{split}
&\rho \partial_{tt}U(x,t)=\mathcal{L}_{\lambda,\mu} U(x,t)+g(x_3)\,f(\tilde{x},t), && x\in \R^3,\; t>0,\\
&U(x,0)=\partial_t\,U(x,0)=0, && x\in \R^3.
\end{split}\right.
\en
The function $g(x_3)\,f(\tilde{x},t)$ can be used to
model source terms which mainly radiate over the $ox_1x_2$-plane and $g(x_3)$ can be regarded as an approximation of the delta function $\delta(x_3)$ in the $x_3$-direction.
 Suppose that the function $g$ is known in advance.
Our inverse problem in this section is concerned with the recovery of $f$ from $U(x, t)$ measured on $\Gamma\times (0, T_1)$ for some $T_1>0$,  $R_1>\sqrt{2}R$ and $\Gamma$ an open subset of $\partial B_{R_1}$.
The proofs of the uniqueness and stability results (Theorems \ref{Th2:Uniqueness} and \ref{t2}) will be presented in the subsequent two subsections.

Recall that by Lemma \ref{l1} in the appendix, the boundary value problem (\ref{Lame2}) admits a unique solution in
$\mathcal C^2([0,+\infty);L^2(\R^3))^3\cap \mathcal C([0,+\infty);H^2(\R^3))^3$
under the assumption $f(\tilde{x},0)=0$.
Below we prove the uniqueness  with partial boundary data measured over a finite time. 
\subsection{Proof of Theorem \ref{Th2:Uniqueness}}
By the strong Huygens principle, fixing $\epsilon >0$, it holds that $U(x, t)= 0$ for all $|x|<R_1+\epsilon$ and $t>T_1+2\epsilon$ (see e.g. \cite{BHKY}). Then, applying the Fourier transform in time to $U$, with $U_{|\partial B_{R_1}\times(-\infty,0]}=0$, gives
\be\label{Lame3}
\mathcal{L}_{\lambda,\mu} \hat{U}(x,\omega)+\omega^2\rho \hat{U}(x,\omega)=-g(x_3)\,\hat{f}(\tilde{x},\omega), \quad &x\in \R^3,\; \omega\in \R,
\en
where
$$\hat{U}(x, \omega):=\int_\R U(x,t)e^{-i\omega t}dt,\qquad \omega\in\R$$
 satisfies the Kupradze radiation condition as $|x|\rightarrow\infty$ (see \cite{BHKY,Ku}) for any fixed $\omega\in \R$. Here $\hat{f}(\tilde{x},\omega)$ denotes the Fourier transform of $f( \tilde{x},t)$ with respect to the time variable.
Evidently, we have the boundary condition $\hat{U}(x, \omega)=0$, $x\in\Gamma$, $\omega\in\R$.
Since, for all $\omega\in\R$, the support of the function $x\mapsto\hat{f}( \tilde{x},\omega) g(x_3))$ is  contained into $B_{R_1}$,
by elliptic interior regularity, we deduce that $x\mapsto\hat{U}(x, \omega)$ is analytic with respect to the spatial variable $x$ in a neighborhood of $\partial B_R$. By analyticity of both the surface $\partial B_{R_1}$ and the function $\hat{U}(\cdot, \omega)$, we get the vanishing of $\hat{U}(x, \omega)$ on the whole boundary $\partial B_{R_1}$ for any $\omega\in\R$.
In view of the uniqueness to the Dirichlet boundary value problem in the unbounded domain $|x|>R_1$ (see e.g., \cite{BHSY2018}), we get
\ben
\hat{U}(x, \omega)= 0,\quad\quad |x|>R_1,\quad \omega\in \R.
\enn
Consequently, we have $T\hat{U}(x,\omega)=0$ on $\partial B_{R_1}$. Since the source term $f=(f_1, f_2, 0)^\top$ is compactly supported on $\tilde{B}_R$,  by Hodge decomposition the function
$\hat{f}$ can be spatially decomposed into the form
\be\label{decF}
\hat{f}(\tilde{x},\omega)=\begin{pmatrix}
\nabla_{\tilde{x}}\; \hat{f}_p(\tilde{x},\omega) \\ 0
\end{pmatrix}
+ \begin{pmatrix}\nabla_{\tilde{x}}^\perp\; \hat{f}_s(\tilde{x},\omega) \\ 0
\end{pmatrix},
\en
where $\hat{f}_p(\cdot,\omega)$ and $\hat{f}_s( \cdot, \omega)$ are scalar functions compactly supported on $\tilde{B}_R$ as well. Here $\nabla_{\tilde{x}}=(\partial_1, \partial_2)^\top$, $\nabla_{\tilde{x}}^\perp=(-\partial_2, \partial_1)^\top$. For $\xi=(\xi_1,\xi_2)\in \R^2$ satisfying
$$|\xi|> k_s>k_p,\qquad \; k_p^2:=\frac{\omega^2\rho}{\lambda+2\mu},\quad k_s^2:=\frac{\omega^2\rho}{\mu},$$
we introduce the test functions
\ben
&&V_p(x,\omega)=\begin{pmatrix}
-i\xi_1 \\ -i \xi_2 \\ \sqrt{|\xi|^2-k_p^2}
\end{pmatrix} e^{-i\xi\cdot \tilde{x}+ \sqrt{|\xi|^2-k_p^2}\; x_3},\\
&&V_s(x,\omega)=\begin{pmatrix}
i\xi_2 \\ -i \xi_1 \\ 0
\end{pmatrix} e^{-i\xi\cdot \tilde{x}+ \sqrt{|\xi|^2-k_s^2}\; x_3}.
\enn
The numbers $k_p$ and $k_s$ denote respectively the compressional and shear wave numbers in the frequency domain.
One can easily check that $\nabla_{\tilde{x}}^\perp\cdot V_p\equiv 0$, $\nabla_{\tilde{x}}\cdot V_s\equiv 0$ in $\R^3$ and, using the fact that
$$\nabla_x\times V_p=0,\quad (\lambda+2\mu)\nabla_x\nabla_x\cdot V_p=-\omega^2\rho V_p,$$
$$\nabla_x\cdot V_s=0,\quad -\mu\nabla_x\times (\nabla_x\times V_s)=-\omega^2\rho V_s,$$
we deduce that $V_\alpha$ ($\alpha=p,s$) satisfies the homogeneous Lam\'e system in the frequency domain
\ben
\mathcal{L}_{\lambda,\mu} V_\alpha(x,\omega)+\omega^2\rho\; V_\alpha (x,\omega)=0\quad \mbox{in}\quad \R^3,\qquad x\in \R^3,\quad \alpha=p,s,
\enn
for any fixed $\omega\in \R$.
Now, multiplying $V_p$ to both sides of the equation (\ref{Lame3}) and applying Betti's formula, we obtain
 \ben
 &&\int_{B_{R_1}}\left( \mathcal{L}_{\lambda,\mu} \hat{U}(x,\omega)+\omega^2\rho \hat{U}(x,\omega)\right) \cdot V_p(x,\omega)\,dx \\
 &=& \int_{\partial B_{R_1}}  \mathcal T \hat{U}(x,\omega)\cdot V_p(x,\omega)-  \mathcal T  V_p(x,\omega)\cdot \hat{U}(x,\omega)\,ds(x)\\
 &=& 0,
 \enn
 where we have used the vanishing of the Cauchy data of $\hat{U}$ on $\partial B_{R_1}$.
 On the other hand, making use of (\ref{decF}) together with the relation $\nabla_{\tilde{x}}^\perp\cdot V_p\equiv 0$ yields
\ben
0&=&\int_{B_{R_1}} V_p(x,\omega)\cdot \hat{f}(x)(\tilde{x},\omega)
g(x_3)  dx\\
&=& \int_{B_{R_1}}
V_p(x,\omega)\cdot\begin{pmatrix}
\nabla_{\tilde{x}} \hat{f}_p(\tilde{x},\omega) \\ 0 \end{pmatrix} \;g(x_3) dx\\
&=&\left( \int_{\tilde{B}_R}
\begin{pmatrix} -i\xi_1 \\ -i\xi_2 \\ 0\end{pmatrix}e^{-i\xi\cdot \tilde{x}}
\cdot\begin{pmatrix}
\nabla_{\tilde{x}} \hat{f}_p(\tilde{x},\omega) \\ 0 \end{pmatrix} d\tilde{x}\right)\;
\left(\int_{-R}^R g(x_3) e^{\sqrt{|\xi|^2-k_p^2}\; x_3}\,dx_3\right)\\
&=&|\xi|^2\, \left(\int_{\tilde{B}_R} e^{-i\xi\cdot \tilde{x}} \hat{f}_p(\tilde{x},\omega)\,d\tilde{x}\right)\left(\int_{-R}^R g(x_3) e^{\sqrt{|\xi|^2-k_p^2}\; x_3}\,dx_3\right)
\enn
for all  $\omega\in \R$ and $\xi\in \R^2$ satisfying $|\xi|>k_s$. Since $g$ is compactly supported and lies in the space $L^1((-R,R))$, the function
$$\mathbb C\ni z\mapsto \int_{-R}^R g(x_3) e^{z\; x_3}\,dx_3$$
is holomorphic in $\mathbb C$. Then, using the fact that $g$ is not uniformly vanishing, for every $\omega\in\mathbb R$, we can find an open and not-empty interval $I_\omega\subset (k_s,+\infty)$ such that
$$\int_{-R}^R g(x_3) e^{\sqrt{|\xi|^2-k_p^2}\; x_3}\,dx_3\neq0,\quad \xi\in\R^2,\ |\xi|\in I_\omega.$$
 Hence, for every $\omega\in\R$, we have
\begin{equation}\label{ana}
\int_{\tilde{B}_R} e^{-i\xi\cdot \tilde{x}} \hat{f}_p(\tilde{x},\omega)\,d\tilde{x}=0\quad \mbox{for all}\quad \xi\in\R^2,\ |\xi|\in I_\omega.
\end{equation}
This implies that, for $\omega\in\R$ and for $\hat{f}_p(\cdot,\omega):\tilde{x}\mapsto \hat{f}_p(\tilde{x},\omega)$, the Fourier transform $\mathcal F_{\tilde{x}}[\hat{f}_p](\xi)$ of $\hat{f}_p(\cdot,\omega)$ with respect to $\tilde{x}\in\R^2$ vanishes for $\xi\in\{\eta\in\R^2: |\eta|\in I_\omega\}$. On the other hand, since, for all $\omega\in\R$, $\hat{f}_p(\cdot,\omega)$ is supported in $\tilde{B}_R$, the function
$$\xi\mapsto \int_{\R^3} e^{-i\xi\cdot \tilde{x}} \hat{f}_p(\tilde{x},\omega)\,d\tilde{x}=\int_{\tilde{B}_R} e^{-i\xi\cdot \tilde{x}} \hat{f}_p(\tilde{x},\omega)\,d\tilde{x}$$
is real analytic with respect to $\xi\in\R^2$. Then, using the fact that the set $\{\xi\in\R^2:\ |\xi|\in I_\omega\}$ is an open subset of $\R^2$, it follows from \eqref{ana} that
$$\int_{\tilde{B}_R} e^{-i\xi\cdot \tilde{x}} \hat{f}_p(\tilde{x},\omega)\,d\tilde{x}=0\quad \mbox{for all}\quad \xi\in\R^2.$$
 Applying the inverse Fourier transform in $\tilde{x}$, we get $\hat{f}_p(\cdot,\omega)=0$
 for all $\omega\in\R$. Further, applying the inverse Fourier transform in $t$ yields $f_p(\tilde{x}, t)\equiv0$ for all $\tilde{x}\in \tilde{B}_R$ and $t>0$.
 The fact that $f_s\equiv 0$ can be verified analogously by multiplying $V_s$ to both sides of (\ref{Lame3}).
 This finishes the proof of the relation $f\equiv 0$ in $\tilde{B}_R\times(0,T)$.
$\hfill\Box$

To derive a stability estimate of $f$, we need 
the dynamic data measured over the whole boundary $\partial B_R$. In contrast with the proof of Theorem \ref{Th2:Uniqueness} , we shall carry out the proof of Theorem \ref{t2} in the time domain without using the Fourier transform in the time variable.

\subsection{Proof of Theorem \ref{t2}}

As done in (\ref{decF}),
we can split $f$ via Hodge decomposition into the form
\be\label{dec}
f(\tilde{x},t)=
\begin{pmatrix}
\nabla_{\tilde{x}} f_p(\tilde{x},t) \\ 0
\end{pmatrix}
+
\begin{pmatrix}
\nabla_{\tilde{x}}^\perp f_s(\tilde{x},t) \\ 0
\end{pmatrix},
\en
where $f_p( \cdot,t)$ and $f_s( \cdot,t)$ are scalar functions compactly supported on $\tilde{B}_R$.
Fixing $\omega>0$ and $\xi\in\mathbb R^2$ such that
\begin{equation}\label{t2c} |\xi|^2>k_p^2:=\frac{\omega^2\rho}{\lambda+2\mu},\end{equation}
we introduce the time-dependent test function
\ben
V_p(x,t; \xi, \omega)&=&\begin{pmatrix}
-i\xi_1 \\ -i \xi_2 \\ \sqrt{|\xi|^2-k_p^2}
\end{pmatrix} e^{-i\xi\cdot \tilde{x}+ \sqrt{|\xi|^2-k_p^2}\; x_3}\,e^{-i\omega t}.\enn
In the same way, for
\begin{equation}\label{tt2c}|\xi|^2>k_s^2:=\frac{\omega^2\rho}{\mu},\end{equation}
we introduce the function
\ben V_s(x,t; \xi, \omega)&=&\begin{pmatrix}
i\xi_2 \\ -i \xi_1 \\ 0
\end{pmatrix} e^{-i\xi\cdot \tilde{x}+ \sqrt{|\xi|^2-k_s^2}\; x_3}\,e^{-i\omega t}.
\enn
Then, in a similar way to the proof of the uniqueness result, one can check that $V_\alpha$ ($\alpha=p,s$) are solutions to the homogeneous elastodynamic equation
\be\label{eq:V}
\rho\frac{\partial^2}{\partial t^2} V_\alpha(x,t; \xi, \omega)   -\mathcal{L}_{\lambda,\mu} V_\alpha(x,t; \xi, \omega)=0\qquad\mbox{in}\quad \R^3\times \R^+
\en for any fixed $\xi\in \R^2$ and $\omega\in \R$ satisfying \eqref{t2c} or \eqref{tt2c}.  Moreover, one can easily check that
\be\label{eq:VP}
\nabla_{\tilde{x}}^\perp\cdot V_p(x,t)=\nabla_{\tilde{x}}\cdot V_s(x,t)= 0,\quad (x,t)\in\R^3\times\R.
\en
Therefore, multiplying $V_p(x,t;\xi, \omega)$ to the right hand side of the equation (\ref{Lame2}), using (\ref{eq:V}) and applying integration by parts yield
 \ben
 &&\int_0^{T_1}\int_{B_{R_1}}\left(\rho\frac{\partial ^2}{\partial t^2} U(x,t)- \mathcal{L}_{\lambda,\mu} U(t, x)\right) \cdot V_p(x,t;\xi, \omega)\,dxdt \\
 &=&-\int_0^{T_1}\int_{\partial B_{R_1}} \mathcal T U(x,t)\cdot V_p(x,t;\xi, \omega)-  \mathcal T  V_p(t, x; \xi, \omega)\cdot U(x,t; \xi, \omega)\,ds(x)\,dt\\
 &&+ \int_0^{T_1}\int_{B_{R_1}}\left(\rho\frac{\partial ^2}{\partial t^2} U(x,t)\cdot  V_p(x,t;\xi, \omega)- \rho\frac{\partial ^2}{\partial t^2} V_p(x,t)\cdot  U(x,t;\xi, \omega)\right) \,dxdt \\
 &=&-\int_0^{T_1}\int_{\partial B_{R_1}} \mathcal T U(x,t)\cdot V_p(x,t;\xi, \omega)-  \mathcal T  V_p(t, x; \xi, \omega)\cdot U(x,t; \xi, \omega)\,ds(x)\,dt\\
 && +\;\rho\int_{B_{R_1}} \frac{\partial U(x,T_1)}{\partial t}\cdot V_p(x,T_1;\xi, \omega)-  \frac{\partial V_p(x,T_1;\xi, \omega)}{\partial t}\cdot U(x, T_1)\,dx.
 \enn
Again recalling Huygens principle, we know $U(x,T_1)=\partial_tU(x,T_1) =0$  for all $x\in B_R$ and $T_1>T+\frac{2R_1\sqrt{\rho}}{\sqrt{\mu}}$. Hence, the integral over $B_{R_1}$ on the right hand side of the previous identity vanishes.
 Following estimate \eqref{l2a} of Proposition \ref{p3} in the appendix, the traction of $U$ on the boundary $\partial B_{R_1}$ can be bounded by the trace of $U$ itself. Hence,  the left hand side can be bounded by
   \be\nonumber
&& \left|\int_0^{T_1}\int_{B_{R_1}}\left(\rho\frac{\partial ^2}{\partial t^2} U(x,t)- \mathcal{L}_{\lambda,\mu} U(t, x)\right) \cdot V_p(x,t;\xi, \omega)\,dxdt\right| \\ \nonumber
&=&\left| \int_0^{T_1}\int_{\partial B_{R_1}} \mathcal T U(x,t)\cdot V_p(x,t;\xi, \omega)-  \mathcal T  V_p(t, x; \xi, \omega)\cdot U(x,t; \xi, \omega)\,ds(x)\,dt \right| \\ \nonumber
 &\leq& || \mathcal T U||_{L^2((0,T_1)\times \partial B_{R_1})^3} ||V_p||_{L^2((0,T_1)\times \partial B_{R_1})^3}+
  ||U||_{L^2((0,T_1)\times \partial B_{R_1})^3} || \mathcal T V_p||_{L^2((0,T_1)\times \partial B_{R_1})^3}    \\ \nonumber
 &\leq& C\;\left(\norm{U}_{H^3(0,T_1;H^{3/2}(\partial B_{R_1})^3}||V_p||_{L^2((0,T_1)\times \partial B_{R_1})^3}+  ||U||_{L^2((0,T_1)\times \partial B_{R_1})^3}\; ||V_p||_{L^2(0,T_1;H^2(  B_{R_1}))} \right)\\ \nonumber
&\leq& C\;\norm{U}_{H^3(0,T_1;H^{3/2}(\partial B_{R_1})^3}||V_p||_{L^2(0,T_1;H^2(  B_{R_1}))}\\ \label{UP}
&\leq& C\;\norm{U}_{H^3(0,T_1;H^{3/2}(\partial B_{R_1})^3}(1+|(\xi,\omega)|^3)e^{R\sqrt{|\xi|^2-k_p^2} }
  \en for all $|\xi|>k_p$,
	where $C>0$ depends on $M$, $R_1$, $T_1$, $\rho$, $\lambda$ and $\mu$.
	On the other hand, using the governing equation (\ref{Lame2}) together with the relations (\ref{dec}), (\ref{eq:VP}) and using the fact that the sign of $g$ is constant,
we  obtain a lower bound of the left hand side of (\ref{UP}):
\ben
&&\left|\int_0^{T_1}\int_{B_{R_1}}\left(\rho\frac{\partial ^2}{\partial t^2} U(x,t)- \mathcal{L}_{\lambda,\mu} U(x,t)\right) \cdot V_p(x,t;\xi, \omega)\,dxdt\right| \\
&=& \left|\int_0^{T_1}\int_{B_{R_1}} f(\tilde{x},t)g(x_3) \cdot V_p(x,t;\xi, \omega)\,dx\,dt \right|\\
&=& \left|\int_0^{T_1}\int_{B_{R_1}} \begin{pmatrix}
\nabla_{\tilde{x}}f_p(\tilde{x},t)\\
0\end{pmatrix}
g(x_3)\;\cdot V_p(x,t;\xi, \omega)\,dx\,dt\right|\\
&=& |\xi|^2\,\left|\int_0^{T_1}\int_{B_{R_1}} f_p(\tilde{x},t) g(x_3)\,e^{-i\xi\cdot \tilde{x}+ \sqrt{|\xi|^2-k_p^2}\; x_3}\,e^{-i\omega t}\,  dx\,dt \right|\\
&=& |\xi|^2\,\left|\left(\int_0^{T_1}\int_{\tilde{B}_R} f_p(\tilde{x},t)e^{-i\xi\cdot \tilde{x}-i\omega t} d\tilde{x}dt\right)
\left(\int_{-R}^{R}
 g(x_3)\,e^{\sqrt{|\xi|^2-k_p^2}\; x_3}\,d x_3\right)\right|\\
 &\geq& |\xi|^2\,\left|\left(\int_0^{T_1}\int_{\tilde{B}_R} f_p(\tilde{x},t)e^{-i\xi\cdot \tilde{x}-i\omega t} d\tilde{x}dt\right)\right|
\norm{g}_{L^1(\R)} \; e^{-(\sqrt{|\xi|^2-k_p^2}) R},
 \enn for all $|\xi|>k_p$.
 Since $f_p$ is supported on $\tilde{B}_R\times(0,T_1)$,
 the first integral on the right hand of the last identity is the Fourier transform
 of $f_p$ with respect to $(\tilde{x},t)$ at the value $(\xi,\omega)$, which we denote by $\hat{f}_p(\xi,\omega)$.
Combining the previous two relations we obtain
\begin{equation}\label{es}
|\hat{f}_p(\xi,\omega)|\leq C\,\frac{(1+|(\xi,\omega)|^3)\norm{U}_{H^3(0,T;H^{3/2}(\partial B_{R_1}))^3}\,e^{2R\sqrt{|\xi|^2-k_p^2}}  }{|\xi|^2\; \norm{g}_{L^1(\R)} \;   }
\end{equation}
for all $|\xi|>k_p(\omega)$. We note that (\ref{es}) gives the estimate of $\hat{f}_p$ over the cone
$\{(\xi,\omega)\in\R^3: |\xi|^2>\omega^2 \rho/(\lambda+2\mu)\}$.
In order to derive from (\ref{es}) a stability estimate of $\hat{f}_p$ on $B_r$ for a large $r>0$,
 we will use a result of stability in the analytic continuation, following the arguments presented
 in \cite{CK,Ki1}. Below we state a stability estimate for analytic continuation problems; see \cite[Theorem 4]{AEWZ} (see also  \cite{Ma,Na}, where similar results  were established).
\begin{prop}\label{p2}  Let $s>0$ and assume that $g:\ B_{2s}\subset \R^{3}\to \mathbb C$ is a real analytic function satisfying
\[\norm{\nabla^{\beta }g}_{L^\infty(B_{2s})}\leq \frac{N\,\beta\,!}{(s \tau)^{\abs{\beta}}},\qquad \beta=(\beta_1,\beta_2,\beta_2)\in\mathbb N^{3},\]
for some $N>0$ and $0<\tau\leq1$. Further let $E\subset B_{s/2}$ be a  measurable set with strictly positive Lebesgue measure. Then,
\[\norm{ g}_{L^\infty(B_s)}\leq CN^{(1-b)}\norm{ g}_{L^\infty(E)}^{b},\]
where $b\in(0,1)$, $C>0$ depend  on   $\tau$, $\abs{E}$ and $s$.\end{prop}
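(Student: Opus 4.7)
The plan is to prove this propagation of smallness via complex-analytic extension followed by a reduction to a one-variable Hadamard three-circles inequality, with the positive-measure set $E$ handled by a Remez--Brudnyi--Ganzburg type inequality. This is in essence the strategy of \cite{AEWZ}, and I would organize it in four steps.

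First, I would use the derivative bound to extend $g$ holomorphically. Taylor-expanding around an arbitrary $x_{0}\in\overline{B_{3s/2}}$, the hypothesis $\norm{\nabla^{\beta}g}_{L^{\infty}(B_{2s})}\leq N\,\beta!/(s\tau)^{\abs{\beta}}$ gives an absolutely convergent series on the complex polydisc of polyradius $(s\tau/6,s\tau/6,s\tau/6)$ centred at $x_{0}$. Patching these local extensions together, $g$ extends to a holomorphic function $\tilde g$ on a complex neighbourhood $\Omega_{\tau}$ of $\overline{B_{3s/2}}$ with $\norm{\tilde g}_{L^{\infty}(\Omega_{\tau})}\leq C_{0}N$, where $C_{0}$ depends only on $\tau$.

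Second, I would reduce to a single complex variable. For any $y\in B_{s}$ and unit vector $e\in\BS^{2}$, the function $\zeta\mapsto\tilde g(y+\zeta e)$ is holomorphic on a disc $D(0,r_{0})\subset\C$ with $r_{0}\sim s\tau$ and bounded there by $C_{0}N$. Hadamard's three-circles theorem applied to concentric subdiscs of radii $r_{1}<r_{2}<r_{3}\leq r_{0}$ yields an inequality of the form $\sup_{D(0,r_{2})}\abs{h}\leq(\sup_{D(0,r_{1})}\abs{h})^{\theta}(\sup_{D(0,r_{3})}\abs{h})^{1-\theta}$ with $\theta=\theta(r_{1},r_{2},r_{3})\in(0,1)$. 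Varying $y$ and $e$ and taking suprema, this produces a three-balls inequality for $g$ itself: there exist $r>0$ and $b_{0}\in(0,1)$, depending only on $\tau$ and $s$, such that for every $y\in B_{3s/2}$,
\[
\norm{g}_{L^{\infty}(B(y,r))}\leq C\,N^{1-b_{0}}\,\norm{g}_{L^{\infty}(B(y,r/2))}^{b_{0}}.
\]

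Third, and this is the main obstacle, I would upgrade the small-ball control on the right-hand side to control on the measurable set $E$. By Lebesgue's density theorem, $E$ has at least one density point $y^{\ast}\in B_{s/2}$, and for a sufficiently small $r^{\ast}\leq r$ one has $\abs{E\cap B(y^{\ast},r^{\ast}/2)}\geq\frac12\abs{B(y^{\ast},r^{\ast}/2)}$. Here I invoke a Remez-type inequality (Brudnyi--Ganzburg or the sharper quantitative reverse-Hölder statement of \cite{AEWZ}): if $h$ is real-analytic on $B(y^{\ast},r^{\ast})$ with derivative bounds of the type assumed for $g$, and $F\subset B(y^{\ast},r^{\ast}/2)$ has $\abs{F}\geq\delta\abs{B(y^{\ast},r^{\ast}/2)}$, then $\norm{h}_{L^{\infty}(B(y^{\ast},r^{\ast}/2))}\leq \Lambda(\delta)\,\norm{h}_{L^{\infty}(F)}$ for some $\Lambda(\delta)$ depending only on $\delta$ and $\tau$. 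Applied to $g$ with $F=E\cap B(y^{\ast},r^{\ast}/2)$, this replaces the $L^{\infty}$ norm on the small ball by a constant times $\norm{g}_{L^{\infty}(E)}$, with the constant depending only on $\abs{E}$, $\tau$ and $s$.

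Fourth, I would chain the three-balls inequality along a finite Vitali-type sequence of overlapping balls in $B_{2s}$ connecting $y^{\ast}$ to an arbitrary point of $B_{s}$. The number of chaining steps is bounded in terms of $s$ and $r$, and iterating the inequality produces the final bound $\norm{g}_{L^{\infty}(B_{s})}\leq C\,N^{1-b}\norm{g}_{L^{\infty}(E)}^{b}$ with some $b\in(0,1)$ and $C$ depending only on $\tau$, $\abs{E}$ and $s$. The truly delicate step is the third one, because the arbitrary measurability of $E$ prevents direct use of classical Hadamard or Phragmén--Lindelöf arguments; obtaining an exponent $b$ depending only on $\abs{E}$ (and not on finer geometric information about $E$) is precisely the quantitative content of \cite{AEWZ}.
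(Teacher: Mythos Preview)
The paper does not prove this proposition: it is stated as a citation of \cite[Theorem~4]{AEWZ} (with \cite{Ma,Na} mentioned as related antecedents), and is then used as a black box in the stability argument for Theorem~\ref{t2}. So there is no ``paper's own proof'' to compare against; your proposal supplies an argument where the paper supplies only a reference.

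Your four-step outline is indeed the skeleton of the \cite{AEWZ} argument, and steps~1, 2 and~4 are sound. The delicate point, which you yourself flag at the end, is step~3. As written, your density-point argument produces a radius $r^{\ast}$ that depends on the particular set $E$ (how concentrated $E$ is near $y^{\ast}$), not merely on $\abs{E}$. This then contaminates the chaining in step~4: to propagate from $B(y^{\ast},r^{\ast}/2)$ out to balls of the fixed scale $r$ from step~2 requires a number of three-balls iterations comparable to $\log(r/r^{\ast})$, and hence the final exponent $b$ and constant $C$ would depend on $r^{\ast}$, i.e.\ on $E$ itself rather than on $\abs{E}$ alone. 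The statement as formulated in the paper (and in \cite{AEWZ}) asserts dependence only on $\abs{E}$, $\tau$ and $s$, and achieving this is exactly the nontrivial quantitative content of \cite{AEWZ}; their proof does not go through a single density point but uses a more uniform Remez-type estimate together with a careful covering argument. Your final sentence shows you are aware of this, but the body of step~3 does not actually deliver it. If you want a self-contained proof rather than a citation, that step needs to be replaced by the genuine \cite{AEWZ} mechanism.
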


Following \cite{Ki1}, we introduce the function
\[H_r(\xi,\omega):=\hat{f}_p(r(\xi,\omega))=(2\pi)^{-3/2}\int_{\R^{3}}f_p(\tilde{x},t)e^{-ir(\omega t+\xi\cdot \tilde{x})}d\tilde{x}dt\]
for some $r>1$ and $|(\xi,\omega)|\leq 2s$.
In a similar way to \cite{Ki1}, we fix $s=[\max(T_1,2R)]^{-1}+1$,  choose   $N=Ce^{3r}$, with $C$ some constant independent of $r$, and take $\tau=\frac{[\max(T_1,2R)]^{-1}}{s}=(s-1)/s$. Then we obtain
\begin{equation}\label{t2d} \norm{\partial_\omega^n\partial^\beta_\xi H_r}_{L^\infty(B_{2s})}\leq C\frac{e^{3r}\beta!n!}{([\max(T,2R)]^{-1})^{\abs{\beta}+n}}=\frac{N\beta!n!}{(s \tau)^{\abs{\beta}+n}},\quad n\in\mathbb N_+,\ \beta\in\mathbb N_+^{2}.\end{equation}
Moreover, fixing $c:=\frac{\rho}{\lambda+2\mu}$, $d:= \frac{s}{2\sqrt{1+c^{-1}}}$ and  $a_r\in\left(0,\frac{d}{\sqrt{c}}\right)$,  we define
$$E_r:=\left\{(\xi,\omega)\in\tilde{B}_d\in\times\left[-a_r,\frac{d}{\sqrt{c}}\right]:\ \max(r^{-2},\sqrt{c}|\omega|)<|\xi| \right\}.$$
It is easy to check that $E_r$ is a subset of $B_{s/2}$ in $\R^3$, and it is also a subset of the
cone
$\{(\xi,\omega)\in\R^3: |\xi|^2>\omega^2 \rho/(\lambda+2\mu)\}$.
We remark that $|E_r|=\kappa_r(-a_r)$, where
$$\kappa_r:y\mapsto \int_y^{\frac{d}{\sqrt{c}}}\int_{\max(r^{-2},\sqrt{c}|\omega|)<|\xi|<d}d\xi d\omega.$$
Note that $\kappa_r\left(-\frac{d}{\sqrt{c}}\right)=2\kappa_r(0)$ and one can check that
$$\kappa_r(0)=\frac{2\pi d^3}{3\sqrt{c}}+\pi r^{-2}\left(\frac{d^2}{3\sqrt{c}}-\frac{2r^{-4}}{3\sqrt{c}}\right).$$
Thus, there exists $r_0>1$ depending only on $R$, $\rho$, $\lambda$, $\mu$, $T$, such that
$$ \frac{\pi d^3}{2\sqrt{c}}<\kappa_r(0)< \frac{5\pi d^3}{6\sqrt{c}},\quad r>r_0.$$
Therefore, we have $$\kappa_r\left(-\frac{d}{\sqrt{c}}\right)=2\kappa_r(0)>\frac{\pi d^3}{\sqrt{c}}>\frac{5\pi d^3}{6\sqrt{c}}>\kappa_r(0)$$
and, from the continuity of the map $\kappa_r$, we deduce that we can choose $a_r$ in such way that
$$|E_r|=\kappa_r(a_r)=\frac{\pi d^3}{\sqrt{c}},\quad r>r_0.$$
This implies that, with such choice of $a_r$, the volume $|E_r|$ depends only on $R$, $\rho$, $\lambda$, $\mu$ and $T_1$. Consequently,  combining \eqref{t2d} with Proposition \ref{p2}, we deduce that
$$|\hat{f}_p(r(\xi,\omega))|=|H_r(\xi,\omega)|\leq Ce^{3(1-b)r}\left(\norm{ H_r}_{L^\infty(E_r)}\right)^{b},\quad |(\xi,\omega)|<s,\ r>r_0,$$
where $C>0$, $b\in(0,1)$ depend only on $R$, $\rho$, $\lambda$, $\mu$ and $T_1$.
In addition, applying \eqref{es}, we get
$$\norm{ H_r}_{L^\infty(E_r)}\leq Cr^4 e^{c_1 r}\norm{U}_{H^3(0,T;H^{3/2}(\partial B_R))^3},$$
where $C$ and $c_1$  depend only on $R$, $\rho$, $\lambda$, $\mu$ and $T_1$. Therefore, we can find $C$, $c$ depending only on $R$, $\rho$, $\lambda$, $\mu$ and $T_1$ such that
$$|\hat{f}_p(\xi,\omega)|\leq Ce^{cr}\norm{U}_{H^3(0,T;H^{3/2}(\partial B_R))^3},\quad |(\xi,\omega)|<r,\ r>sr_0.$$
 It follows that
\begin{equation}\label{t2e}\int_{B_r}|\hat{f}_p(\xi,\omega)|^2d\xi d\omega\leq Ce^{cr}\norm{U}^2_{H^3(0,T;H^{3/2}(\partial B_R))^3},\quad |(\xi,\omega)|<r,\ r>sr_0,\end{equation}
by eventually replacing the constants $C$ and $c$.  On the other hand, using \eqref{t2a} and the fact that $\Delta_{\tilde{x}}f_p=\nabla_{\tilde{x}}\cdot f$, we deduce that $f_p\in H^2(\R^3)$ and $\norm{f_p}_{H^2(\R^3)}\leq CM$, with $C$ depending only on $T$ and $R$. Thus, we find
$$\begin{aligned}\int_{|(\xi,\omega)|>r}|\hat{f}_p(\xi,\omega)|^2d\xi d\omega &\leq r^{-4}\int_{|(\xi,\omega)|>r}(1+|(\xi,\omega)|^4)|\hat{f}_p(\xi,\omega)|^2d\xi d\omega\\
\ &\leq r^{-4}\norm{f_p}^2_{H^2(\R^3)}\leq C^2r^{-4}M^2.\end{aligned}$$
Combining this with \eqref{t2e}, we find
$$\begin{aligned}\int_{\R^3}|\hat{f}_p(\xi,\omega)|^2d\xi d\omega&=\int_{B_r}|\hat{f}_p(\xi,\omega)|^2d\xi d\omega+\int_{|(\xi,\omega)|>r}|\hat{f}_p(\xi,\omega)|^2d\xi d\omega \\
\ &\leq C\left(e^{cr}\norm{U}^2_{H^3(0,T_1;H^{3/2}(\partial B_{R_1}))^3}+r^{-4}\right).\end{aligned}$$
Recalling the Plancherel formula, it holds that
\[
\norm{f_p}_{L^2((0,T_1)\times \tilde{B}_{R_1})}\leq C\,\left(e^{cr}\norm{U}_{H^3(0,T_1;H^{3/2}(\partial B_{R_1}))^3}+r^{-2}\right),\quad r>sr_0.
\]
Now, choosing $r=c^{-1}\ln(\norm{U}_{H^3(0,T_1;H^{3/2}(\partial B_{R_1}))^3})$, we get for $\norm{U}_{H^3(0,T_1;H^{3/2}(\partial B_{R_1}))^3}$ sufficiently small that
\begin{equation}\label{t2f}
\norm{f_p}_{L^2((0,T_1)\times \tilde{B}_{R_1})}\leq  C\,\left(\norm{U}^2_{H^3(0,T_1;H^{3/2}(\partial B_{R_1})^3}+\left|\ln\left(\norm{U}_{H^3(0,T_1;H^{3/2}(\partial B_{R_1})^3}\right)\right|^{-2}\right),\end{equation}
which can be obtained by
applying the classical arguments of optimization (see for instance the end of the proof of \cite[Theorem 1]{Ki1}). This gives the estimate of $f_p$ by our measurement data taken on $\partial B_{R_1}$.

Using similar arguments, we can prove
\begin{equation}\label{t2g}
\norm{f_s}_{L^2((0,T)\times \tilde{B}_R)}\leq  C\left(\norm{U}^2_{H^3(0,T_1;H^{3/2}(\partial B_R)^3}+\left|\ln\left(\norm{U}_{H^3(0,T_1;H^{3/2}(\partial B_R)^3}\right)\right|^{-2}\right).\end{equation}
On the other hand, by interpolation and the upper bound (\ref{t2a})
we have
$$\begin{aligned}\norm{f}_{L^2((0,T_1)\times \tilde{B}_R)}&\leq \norm{\nabla_{\tilde{x}}f_p}_{L^2((0,T_1)\times \tilde{B}_R)}+\norm{\nabla_{\tilde{x}}^\perp f_s}_{L^2((0,T_1)\times \tilde{B}_R)}\\
\ &\leq C(\norm{f_p}_{H^1((0,T_1)\times \tilde{B}_R)}+\norm{ f_s}_{H^1((0,T_1)\times \tilde{B}_R)})\\
\ &\leq C(\norm{f_p}_{H^2((0,T_1)\times \tilde{B}_R)}^{\frac{1}{2}}\norm{f_p}_{L^2((0,T_1)\times \tilde{B}_R)}^{\frac{1}{2}}+\norm{f_s}_{H^2((0,T_1)\times \tilde{B}_R)}^{\frac{1}{2}}\norm{f_s}_{L^2((0,T_1)\times \tilde{B}_R)}^{\frac{1}{2}})\\
\ &\leq C(\norm{f_p}_{L^2((0,T_1)\times \tilde{B}_R)}^{\frac{1}{2}}+\norm{f_s}_{L^2((0,T_1)\times \tilde{B}_R)}^{\frac{1}{2}}),\end{aligned}$$
with $C$ depending on $M$, $T_1$ and $R$. Then, combining this with \eqref{t2f}-\eqref{t2g}, we obtain \eqref{t2b}.

\begin{rem}\label{remark1} The uniqueness and stability results presented in Theorems \ref{Th2:Uniqueness} and \ref{t2} carry over to the
scalar inhomogeneous wave equation of the form
\ben
\frac{1}{c^2}\partial_{tt}U(x,t)=\Delta U(x,t)+f(\tilde{x},t)g(x_3), && (x,t)\in \R^3\times (0,\infty),\\
U(x,0)=U_t(x,0)=0, && x\in \R^3,
\enn
where both $f$ and $g$ are compactly supported scalar functions and $c$ a constant.   If the wave speed $c$ and the function $g(x_3)$ are known, one can determine the source term $f(\tilde{x},t)$ from partial boundary data. In particular, $f$ is allowed to be a moving source with the orbit lying on the $ox_1x_2$-plane.
 In the frequency domain, the above wave equation gives rise to an inverse problem of recovering the wave-number-dependent source term $f(\tilde{x}, k)$ from the multi-frequency boundary observation data of the inhomogeneous Helmholtz equation
\ben
\Delta u(x, k)+k^2/c^2\; u(x,k)=\hat{f}(\tilde{x},k)g(x_3).
\enn
Progress along these directions will be reported in our forthcoming publications.
\end{rem}

\section{Appendix}

\subsection{Well-posedness result and estimation of surface traction}

In this subsection, we consider the inhomogeneous Lam\'e system
\be\label{eq1}
\left\{\begin{array}{ll}\rho\partial_t^2U- \mathcal{L}_{\lambda,\mu} U=F(x,t),\quad &(x,t)\in\R^{3}\times(0,+\infty),\\  U(\cdot,0)= \partial_tU(\cdot,0)=0,\quad & x\in \R^3,\end{array}\right.\en
where the operator $\mathcal{L}_{\lambda,\mu}$ is given by (\ref{lame}).
We assume that supp$(F)\subset [0,T)\times B_R$, with $B_R:=\{x\in\R^3:\ |x|<R\}$.
It is well-known that the operator $\mathcal{L}_{\lambda,\mu}$ is an elliptic operator and the standard elliptic regularity holds; see e.g., \cite[Chapters 4 and 10]{Mclean} and \cite[Chapter 5]{Hsiao}.
The quadratic form corresponding to $\mathcal{L}_{\lambda,\mu}$ is given by
\ben
\mathcal{E}(U, V):=\lambda\, (\Div U)(\Div V)+2\mu E(U):E(V)
\enn
where the stress tensor $E$ is defined via (\ref{E}), with the notation  $A:B:=\sum_{i,j=1}^3 a_{ij} b_{ij}$ for $A=(a_{ij})_{i,j=1}^3$, $B=(b_{ij})_{i,j=1}^3$.
Hence, for a bounded Lipschitz domain $D\subset \R^3$ there holds the relation (see e.g., \cite[Lemma 3]{AAIY})
\be\label{Betti}
-\int_{D}\mathcal{L}_{\lambda,\mu}U  \cdot \overline{V} \, dx=
\int_{D}\mathcal{E}(U,\overline{V}) \, dx-\int_{\partial D}\overline{V} \cdot \mathcal T U \, ds
\en
for all $U, V\in H^2(D)^3$.
By the well-known Korn's inequality
(see e.g. \cite[Theorem 10.2]{Mclean}, \cite[Chapter 3]
{DuvautLions}), it holds that
\begin{equation}\label{qad}
\int_{\R^3}\mathcal{E}(U,\overline{U})+c_1\, \norm{U}_{L^2(\R^3)^3}\geq c_2\norm{U}_{H^1(\R^3)^3},\quad U\in H^1(\R^3)^3
\end{equation}
for some constants $c_1,c_2>0$. In the particular case of constant Lam\'e coefficients, we have
\ben
\mathcal{L}_{\lambda,\mu} U=\mu\Delta U +(\lambda+2\mu) \nabla (\nabla\cdot U),
\enn
and
\ben
\mathcal{E}(U, V)
=2\mu\sum_{j,k=1}^3\partial_k U_j\,\, \partial_k V_j+\lambda\, (\Div U)(\Div V)-\mu\,\curl U\cdot\curl V.
\enn
In this case, the surface traction can be simplified to be
\ben
 \mathcal T U= 2 \mu \, \partial_{\nu\,} U + \lambda(\Div U)\,\nu\,
+\mu\, \nu\times \curl\,U\qquad\mbox{on}\quad\partial D.
\enn
We refer to the monograph \cite{Ku} for comprehensive studies on the Lam\'e system.
Below we state a well-posedness result to the elastodynamic system in unbounded domains by applying the standard arguments of \cite[Chapter 8]{LM1}.
\begin{lem}\label{l1} Let $F\in H^1(\R_+;L^2(\R^3))^3$. Then
 problem \eqref{eq1} admits a unique solution
 $$U\in \mathcal C^1([0,+\infty);L^2(\R^3))^3\cap \mathcal C([0,+\infty);H^1(\R^3))^3.$$ Under the additional
 condition $F(\cdot,0)=x\mapsto F(0,x)=0$, the unique solution lies in the space $$U\in \mathcal C^2([0,+\infty);L^2(\R^3))^3\cap \mathcal C([0,+\infty);H^2(\R^3))^3.$$
\end{lem}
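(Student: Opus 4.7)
\textbf{Proof proposal for Lemma \ref{l1}.} The plan is to recast \eqref{eq1} as an abstract second order evolution problem on the Hilbert triple $V\hookrightarrow H\hookrightarrow V'$, where $H:=L^2(\R^3)^3$, $V:=H^1(\R^3)^3$, with bilinear form
\[
a(U,W):=\rho^{-1}\int_{\R^3}\mathcal{E}(U,W)\,dx,\qquad U,W\in V.
\]
Because there is no boundary, Betti's formula \eqref{Betti} reads $\langle -\rho^{-1}\mathcal{L}_{\lambda,\mu}U,W\rangle_{L^2}=a(U,W)$ for $U\in H^2(\R^3)^3$, $W\in V$, so the operator associated to $a$ is exactly $-\rho^{-1}\mathcal{L}_{\lambda,\mu}$. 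By the Korn-type inequality \eqref{qad}, the shifted form $a(U,W)+c_1\rho^{-1}(U,W)_{H}$ is continuous, symmetric and coercive on $V$.

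First, I would apply the standard existence/uniqueness theorem for abstract second order hyperbolic equations (\cite[Chapter 8]{LM1}) to the variational formulation
\[
\frac{d^2}{dt^2}(U(t),W)_{H}+a(U(t),W)=\rho^{-1}(F(t),W)_{H},\quad W\in V,
\]
with $U(0)=\partial_t U(0)=0$. For any $T>0$ and any $F\in L^2(0,T;H)$, this yields a unique $U\in \mathcal C([0,T];V)\cap \mathcal C^1([0,T];H)$, giving the first conclusion since $F\in H^1(\R_+;H)\subset L^2_{\mathrm{loc}}(\R_+;H)$. Uniqueness is classical via the energy identity obtained by testing with $\partial_t U$ and applying Gronwall.

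For the higher regularity claim under $F(\cdot,0)=0$, I would bootstrap by setting $W:=\partial_t U$. Formally $W$ solves
\[
\rho\partial_t^2 W-\mathcal{L}_{\lambda,\mu}W=\partial_t F\in L^2_{\mathrm{loc}}(\R_+;H),
\]
with $W(\cdot,0)=\partial_t U(\cdot,0)=0$ and $\partial_t W(\cdot,0)=\rho^{-1}(\mathcal{L}_{\lambda,\mu}U(\cdot,0)+F(\cdot,0))=0$, using $U(\cdot,0)=0$ and the assumption $F(\cdot,0)=0$. Applying the previous result to this equation gives $W\in \mathcal C([0,T];V)\cap \mathcal C^1([0,T];H)$, i.e. $U\in \mathcal C^1([0,T];V)\cap \mathcal C^2([0,T];H)$. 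Rewriting the equation as $\mathcal{L}_{\lambda,\mu}U(\cdot,t)=\rho\partial_t^2 U(\cdot,t)-F(\cdot,t)\in \mathcal C([0,T];H)$ and invoking the elliptic regularity of $\mathcal{L}_{\lambda,\mu}$ on $\R^3$ (\cite[Chapters 4 and 10]{Mclean}) delivers $U\in \mathcal C([0,T];H^2(\R^3)^3)$.

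The main technical subtlety will be that the embedding $V\hookrightarrow H$ is not compact on the unbounded domain $\R^3$, so the Lions-Magenes framework should be invoked in its form that only requires continuity, symmetry and coercivity of the bilinear form (which is available here after the shift by $c_1\rho^{-1}(\cdot,\cdot)_H$). The shift itself is either absorbed by the change of unknown $U(t)=e^{c_1\rho^{-1}t/2}\widetilde U(t)$ or treated as a bounded perturbation of the generator, neither of which affects the regularity conclusions. Beyond this bookkeeping, all the remaining ingredients (Korn, Betti, elliptic regularity, Gronwall) are directly available in the excerpt.
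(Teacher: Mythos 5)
Your proposal follows essentially the same route as the paper's own proof: the same Hilbert triple $H=L^2(\R^3)^3$, $V=H^1(\R^3)^3$ with the energy form $\mathcal{E}$, Betti's formula and the Korn/G\r{a}rding inequality \eqref{qad} to invoke the Lions--Magenes abstract second order theory for the first conclusion, then the bootstrap $W=\partial_t U$ (using $F(\cdot,0)=0$ to get vanishing data for $W$) followed by elliptic regularity for $\mathcal{L}_{\lambda,\mu}$ to upgrade to $\mathcal C^2([0,+\infty);L^2)^3\cap\mathcal C([0,+\infty);H^2)^3$. The only cosmetic differences are that the paper normalizes $\rho=1$ instead of dividing the form by $\rho$, and it spells out the $t$-continuity of $U$ in $H^2$ via the elliptic a priori estimate applied to differences $U(\cdot,t_1)-U(\cdot,t_2)$, which your last step implicitly contains.
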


\begin{proof} Without lost of generality, we assume that $\rho=1$. We define on $L^2(\R^3)^3$ the sesquilinear form $a$ with domain $D(a):=H^1(\R^3)^3$ given by
$$a(U_1,U_2):=\int_{\R^3}\mathcal{E}(U_1,U_2)dx.$$
In view of \eqref{Betti}, by density, we find
$$\left\langle -\mathcal{L}_{\lambda,\mu}U_1,U_2 \right\rangle_{H^{-1}(\R^3)^3,H^1(\R^3)^3}=a(U_1,U_2),\quad U_1,U_2\in H^1(\R^3)^3.$$
Therefore, in view of \eqref{qad}, fixing $H=L^2(\R^3)^3$, $V=H^1(\R^3)^3$ and applying \cite[Theorem 8.1, Chapter 3]{LM1} and \cite[Theorem 8.2, Chapter 3]{LM1}, we deduce that \eqref{eq1} admits a unique solution  $U\in \mathcal C^1([0,+\infty);L^2(\R^3))^3\cap \mathcal C([0,+\infty);H^1(\R^3))^3$. It remains to prove that $U\in \mathcal C^2([0,+\infty);L^2(\R^3))^3\cap \mathcal C([0,+\infty);H^2(\R^3))^3$.
For this purpose, we consider $V=\partial_tU$ and, using the fact that $F(\cdot,0)=0$,
we deduce that $V$ solves
\begin{equation}\label{eq2}\left\{\begin{array}{ll}\partial_t^2V-\mathcal L_{\lambda,\mu} V=\partial_t\tilde{F}(x,t),\quad &(x,t)\in\R^{3}\times(0,+\infty),\\  V(\cdot,0)= \partial_tV(\cdot,0)=0,\quad &x\in \R^3.\end{array}\right.\end{equation}
Using the fact that $\partial_tF\in L^2((0,+\infty)\times\R^3)$ and applying the above arguments we deduce that $V\in \mathcal C^1([0,+\infty);L^2(\R^3))^3$ and  that $U\in \mathcal C^2([0,+\infty);L^2(\R^3))^3$. Therefore, for any $t\in[0,+\infty)$, $U$ is a solution of the boundary value problem
\begin{equation}\label{eq3}-\mathcal L_{\lambda,\mu} U(x,t)=-\partial_t^2U(x,t)+F(x,t),\quad x\in\R^{3}.\end{equation}
Since $\partial_t^2U(t,\cdot)\in L^2(\R^3)^3$, from the elliptic regularity of the operator $-\mathcal L_{\lambda,\mu}$ (see e.g. \cite[Theorem 5.8.1]{Hsiao}), we deduce that $U(t,\cdot)\in H^2(\R^3)^3$. Moreover, for any $t_1,t_2\in [0,+\infty)$, we have
$$\begin{aligned}&\norm{U(\cdot,t_1)-U(\cdot,t_2)}_{H^2(\R^3)^3}\\
&\leq C(\norm{\mathcal L_{\lambda,\mu} (U(\cdot,t_1)-U(\cdot,t_2))}_{L^2(\R^3)^3}+\norm{U(\cdot,t_1)-U(\cdot,t_2)}_{L^2(\R^3)^3})\\ \ &\leq C\left(\norm{\partial_t^2U(\cdot,t_1)-\partial_t^2U(\cdot,t_2))}_{L^2(\R^3)^3}+\norm{U(\cdot,t_1)-U(\cdot,t_2)}_{L^2(\R^3)^3}+\norm{F(\cdot,t_1)-F(\cdot,t_2))}_{L^2(\R^3)^3}\right).\end{aligned}$$
Therefore, using the fact that $U\in \mathcal C^2([0,+\infty);L^2(\R^3))^3$ and the fact that $F$ extended by $0$ to $\R^3\times\R$ is lying in $ H^1(\R;L^2(\R^3))^3\subset \mathcal C(\R;L^2(\R^3))^3$, we deduce that $U\in \mathcal C([0,+\infty);H^2(\R^3))^3$. \end{proof}

Using this result for $F\in H^1(0,T;L^2(\R^3))^3$, we know
 $$U(x,t),\;
 \mathcal T  U(x,t)\in \mathcal C([0,+\infty);L^2(\partial B_R))^3, \quad (x,t)\in \partial B_R\times[0,\infty).$$
With additional smoothness assumptions we can also estimate $T U_{|\partial B_R\times[0,+\infty)}$ by $U_{|\partial B_R\times[0,+\infty)}$.
The main result of this subsection can be stated as follows.
\begin{prop}\label{p3} Let $T_2>0$ and  let $F\in H^4(\R_+;L^2(\R^3))^3$ be such that $F(\cdot,0)=\partial_t F(\cdot,0)=\partial_t^2 F(\cdot,0)=\partial_t^3 F(\cdot,0)=0$. Then  problem \eqref{eq1} admits a unique solution $U\in \mathcal C^4([0,+\infty);L^2(\R^3))^3\cap \mathcal C^3([0,+\infty);H^2(\R^3))^3$ satisfying the estimate
\begin{equation}\label{l2a}\norm{ \mathcal T  U}_{L^2(\partial B_R\times(0,T_2))^3}\leq C\norm{ U}_{H^3(0,T_2;H^{\frac{3}{2}}(\partial B_R))^3},\end{equation}
with $C$ depending on $\lambda$, $\rho$, $\mu$, $T_2$ and $R$.\end{prop}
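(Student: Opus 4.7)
The plan is to first establish the regularity $U\in \mathcal C^4([0,+\infty);L^2(\R^3))^3\cap \mathcal C^3([0,+\infty);H^2(\R^3))^3$ by iterating Lemma \ref{l1}, and then to derive the trace estimate \eqref{l2a} by exploiting that $U$ satisfies the homogeneous Lam\'e system in the exterior $\R^3 \setminus \overline{B_R}$.

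For the regularity, I would set $V := \partial_t^3 U$, which formally solves $\rho\partial_t^2 V - \mathcal{L}_{\lambda,\mu} V = \partial_t^3 F$ with $\partial_t^3 F \in H^1(\R_+; L^2(\R^3))^3$ since $F \in H^4(\R_+; L^2(\R^3))^3$. Evaluating the original equation and its successive time derivatives at $t=0$, together with the vanishing conditions $\partial_t^j F(\cdot,0) = 0$ for $j=0,1,2,3$ and the zero initial data on $U$, inductively yields $\partial_t^k U(\cdot,0) = 0$ for $k \leq 4$; in particular $V(\cdot,0) = \partial_t V(\cdot,0) = 0$. Since additionally $\partial_t^3 F(\cdot,0)=0$, the second part of Lemma \ref{l1} applied to $V$ gives $V \in \mathcal C^2([0,+\infty);L^2(\R^3))^3 \cap \mathcal C([0,+\infty);H^2(\R^3))^3$, and therefore $U \in \mathcal C^4([0,+\infty);L^2(\R^3))^3 \cap \mathcal C^3([0,+\infty);H^2(\R^3))^3$ as claimed.

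For the trace estimate, the key observation is that $\mathrm{supp}(F) \subset [0,T) \times B_R$, so $U$ satisfies the homogeneous Lam\'e system in $\R^3 \setminus \overline{B_R}$ with zero initial data and Dirichlet data $\phi := U|_{\partial B_R \times (0,T_2)}$. I would introduce a lifting $\Phi$ defined on the exterior, supported in the shell $\Omega := B_{R+1} \setminus \overline{B_R}$ with $\Phi|_{\partial B_R} = \phi$ and $\Phi|_{\partial B_{R+1}} = 0$, satisfying the standard trace-extension estimate $\norm{\Phi(\cdot,t)}_{H^2(\Omega)^3} \leq C \norm{\phi(\cdot,t)}_{H^{3/2}(\partial B_R)^3}$ together with its time derivatives. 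Setting $W := U - \Phi$ in the exterior, $W$ solves a Lam\'e system with zero Dirichlet data on $\partial B_R$, zero initial data (thanks to $\phi(\cdot,0)=\partial_t\phi(\cdot,0)=0$, which follows from $U \in \mathcal C^1(H^1)$ and the initial conditions on $U$), and source $G := \mathcal{L}_{\lambda,\mu}\Phi - \rho\partial_t^2 \Phi$ supported in $\Omega$. Standard energy estimates for the exterior Dirichlet problem of the Lam\'e system then yield $\norm{W}_{\mathcal C([0,T_2]; H^2(\R^3\setminus \overline{B_R}))^3}$ controlled by an $H^1$-in-time norm of $G$, which in turn is bounded by $\norm{\Phi}_{H^3(0,T_2; H^2(\Omega))^3}$ and hence by $\norm{\phi}_{H^3(0,T_2; H^{3/2}(\partial B_R))^3}$.

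Taking tractions on $\partial B_R$, both $\mathcal T W|_{\partial B_R}$ and $\mathcal T \Phi|_{\partial B_R}$ lie in $L^2(0,T_2; H^{1/2}(\partial B_R))^3 \hookrightarrow L^2(\partial B_R \times (0,T_2))^3$ with bounds by the right-hand side of \eqref{l2a}, so the decomposition $\mathcal T U|_{\partial B_R} = \mathcal T W|_{\partial B_R} + \mathcal T \Phi|_{\partial B_R}$ yields the desired estimate. The main obstacle will be carefully matching the time and space regularities in the energy identity for $W$: three time derivatives and $3/2$ spatial derivatives of $\phi$ are exactly what is needed to propagate the boundary regularity into $H^2$-in-space regularity for $W$, so that its traction is well defined in $L^2(\partial B_R)$ pointwise in time. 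Verifying this matching, together with justifying the compatibility of initial conditions that make $W(\cdot,0) = \partial_t W(\cdot,0) = 0$, is the delicate technical part of the argument.
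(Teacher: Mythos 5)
Your proposal is correct and follows essentially the same route as the paper's proof: obtain the $\mathcal C^3([0,+\infty);H^2(\R^3))^3$ regularity by applying Lemma \ref{l1} to $\partial_t^3 U$, then lift the boundary trace $U|_{\partial B_R}$ to an $H^3(0,T_2;H^2)$ extension, subtract it, run an energy estimate for the resulting exterior problem with homogeneous Dirichlet data, and conclude by the continuity of the traction trace. The only cosmetic difference is that you bound $\mathcal T W$ and $\mathcal T\Phi$ separately, whereas the paper first bounds $\norm{U}_{L^2(0,T_2;H^2(\R^3\backslash B_R))^3}$ and applies the trace map once.
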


\begin{proof} Note first that $W=\partial_t^3U$ solves
\begin{equation}\label{eq4}\left\{\begin{array}{ll}\rho\partial_t^2W-\mathcal L_{\lambda,\mu} W=\partial_t^3F(x,t), & (x,t)\in\R^3\times(0,+\infty),\\  W(\cdot,0)= \partial_tW(\cdot,0)=0,\quad & x\in \R^3.\end{array}\right.\end{equation}
Using the fact that $\partial_t^3 F\in H^1(\R_+;L^2(\R^3))^3$ with $\partial_t^3 F(\cdot,0)=0$, we can apply Lemma \ref{l1} to deduce that $W\in \mathcal C^2([0,+\infty);L^2(\R^3))^3\cap \mathcal C([0,+\infty);H^2(\R^3))^3$. Thus, we have   $U\in \mathcal C^4([0,+\infty);L^2(\R^3))^3\cap \mathcal C^3([0,+\infty);H^2(\R^3))^3$. This implies that $g:=U_{|\partial B_R\times[0,+\infty)}\in\mathcal C^3([0,+\infty);H^{\frac{3}{2}}(\partial B_R))^3$. Hence, the restriction of $U$ to $(\R^3\backslash B_R)\times (0,T_2)$ solves the initial boundary value problem
\begin{equation}\label{eq5}\left\{\begin{array}{ll}\rho\partial_t^2U-\mathcal L_{\lambda,\mu} U=0,\quad &(x,t)\in(\R^3\backslash B_R)\times(0,T_2),\\  U(\cdot,0)= \partial_tU(\cdot,0)=0,\quad &x\in \R^3\backslash B_R,\\
U=g,\quad&(x,t)\in \partial B_R\times(0,T_2).\end{array}\right.\end{equation}
Using the fact that $g(\cdot,0)=\partial_tg(\cdot,0)=\partial_t^2g(\cdot,0)=0$, from a classical lifting result, one can find $G\in \mathcal C^3([0,+\infty);H^2(\R^3\backslash B_R))^3$ such that $G_{|\partial B_R\times(0,T_2) }=g$, $G(\cdot,0)=\partial_tG(\cdot,0)=\partial_t^2G(\cdot,0)=0$ and
\begin{equation}\label{l2b}\norm{G}_{H^3(0,T_2;H^2(\R^3))^3}\leq C\norm{g}_{H^3(0,T_2;H^{\frac{3}{2}}(\partial B_R))^3},\end{equation}
with $C$ depending only on $T_2$ and $R$. Therefore, we can split $U$ to $U=V+G$ on $(0,T_2)\times (\R^3\backslash B_R)$, with $V$ the solution of
$$\left\{\begin{array}{ll}\rho\partial_t^2V-\mathcal L_{\lambda,\mu} V=-(\rho\partial_t^2G-\mathcal L_{\lambda,\mu}G):=H,\quad &(x,t)\in(\R^3\backslash B_R)\times(0,T_2),\\  V(\cdot,0)= \partial_tV(\cdot,0)=0,\quad & x\in \R^3\backslash B_R,\\
V=0, &(x,t)\in \partial B_R\times (0,T_2).\end{array}\right.$$

Using the fact that $H\in H^1(0,T_2;L^2( \R^3\backslash B_R))$ and $H(\cdot,0)=0$, in a similar way to Lemma \ref{l1} we can prove that $V\in \mathcal C^2([0,T_2];L^2(\R^3\backslash B_R))^3\cap \mathcal C([0,T_2];H^2(\R^3\backslash B_R))^3$, with
$$\norm{V}_{L^2(0,T_2;H^2(\R^3\backslash B_R))^3}\leq C\norm{H}_{H^1(0,T_2;L^2(\R^3\backslash B_R))^3}\leq C\norm{G}_{H^3(0,T_2;H^2(\R^3))^3},$$
with $C$ depending on $\lambda$, $\rho$, $\mu$, $T_2$ and $R$. Combining this with \eqref{l2b}, we deduce that
$$\norm{U}_{L^2(0,T_2;H^2(\R^3\backslash B_R))^3}\leq C\norm{g}_{H^3(0,T_2;H^{\frac{3}{2}}(\partial B_R))^3}$$
and using the continuity of the trace map, 
we obtain
$$\norm{\mathcal T U}_{L^2((0,T_2)\times\partial B_R)^3}\leq C\norm{U}_{L^2(0,T_2;H^2(\R^3\backslash B_R))^3}.$$
Combining the last two estimates we finally obtain \eqref{l2a}.\end{proof}

\subsection{Long time asymptotic behavior of the solution on a bounded domain}
In this subsection we fix $\Omega_1$  a bounded $\mathcal C^2$ domain of $\R^3$. We consider the bilinear form $a$ with domain $D(a)=H^1(\Omega_1)^3$ given by
\ben a(U_1,U_2)&=&\int_{\Omega_1}\mathcal{E}(U,\overline{V}) \, dx\\
&=&\int_{\Omega_1} \left[(\lambda(x)+2\mu(x))(\nabla\cdot U_1(x))\overline{(\nabla\cdot U_2(x))}-\mu(x)\nabla\times U_1(x)\cdot\overline{\nabla\times U_2(x)}\right]dx.\enn
Then,  for $U_1\in H^1(\Omega_1)^3$ such that $\mathcal{L}_{\lambda,\mu}U_1\in L^2(\Omega_1)^3$ and $ \mathcal T U_1=0$ on $\partial\Omega_1$, we have
\be\label{sesa}
a(U_1,U_2)=-\int_{\Omega_1} \mathcal{L}_{\lambda,\mu}U_1(x)\cdot \overline{U_2(x)}dx.
\en
Fixing $H=L^2(\Omega_1)^3$, $V=H^1(\Omega_1)^3$ and applying \cite[Chapter 5]{Hsiao}, \cite[Theorem 8.1, Chapter 3]{LM1} and \cite[Theorem 8.2, Chapter 3]{LM1}, we deduce that, for $F\in L^2((0,+\infty)\times \Omega_1)^3$, $V_0\in V$ and $V_1\in H$, the problem

\be\label{eaq1}\left\{\begin{array}{lll}\rho\partial_t^2U-\mathcal{L}_{\lambda,\mu} U=F(x,t),\quad &&(x,t)\in\Omega_1\times(0,+\infty),\\  U(\cdot,0)=V_0,\  \partial_tU(\cdot,0)=V_1,\quad && x\in \Omega_1\\
 \mathcal T U(x,t)=0,\quad && (x,t)\in \partial \Omega_1\times(0,+\infty),
\end{array}\right.\en
admits a unique solution in $\mathcal C^1([0,+\infty);L^2(\Omega_1))^3\cap \mathcal C([0,+\infty);H^1(\Omega_1))^3$. Below we show the long time behavior of the solution of \eqref{eaq1}.

\begin{prop}\label{p4}
Let $F\in L^2((0,+\infty)\times \Omega_1)^3$ be such that supp$(F)\subset \overline{\Omega_1}\times[0,T)$ and let $V_0\in H^1(\Omega_1)^3$, $V_1\in L^2(\Omega)^3$. Then  problem \eqref{eaq1} admits a unique solution $U\in \mathcal C^1([0,+\infty);L^2(\Omega_1))^3\cap \mathcal C([0,+\infty);H^1(\Omega_1))^3$ satisfying
\begin{equation}\label{p4a}\norm{U(t,\cdot)}_{H^1(\Omega_1)^3}\leq \tilde{C}(\norm{F}_{L^2((0,T)\times\Omega_1)} +\norm{V_0}_{H^1(\Omega_1)^3}+\norm{V_1}_{L^2(\Omega_1)^3})(t+1),\quad t>0,\end{equation}
with $\tilde{C}$ independent of $t$.
\end{prop}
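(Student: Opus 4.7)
My plan is to combine the standard energy identity for the Lam\'e system with Korn's inequality \eqref{qad} on $\Omega_1$, exploiting that $F$ has compact temporal support so the system becomes conservative for $t\geq T$. Concretely, I would test the equation against $\partial_t U$ and use the Betti formula \eqref{sesa} together with $\mathcal T U=0$ on $\partial\Omega_1$ to kill the boundary integral, obtaining, at least for smooth data, the identity
\[
\frac{d}{dt}\!\left( \frac{\rho}{2}\norm{\partial_t U(\cdot,t)}^2_{L^2(\Omega_1)^3} + \frac{1}{2}a(U(\cdot,t),U(\cdot,t)) \right) = \int_{\Omega_1} F(x,t)\cdot \partial_t U(x,t)\,dx.
\]
For data in the regularity class of the statement, this identity passes through a standard density argument, regularizing $F$, $V_0$, $V_1$ and using the continuous dependence from \cite[Chapter 3]{LM1}. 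Since $\mathcal{E}(U,U)=\lambda|\nabla\cdot U|^2 + 2\mu|E(U)|^2\geq 0$, the quantity $E(t)$ on the left is nonnegative; using $F(\cdot,s)\equiv 0$ for $s\geq T$, Cauchy--Schwarz, and the chain rule applied to $\sqrt{E(t)}$, one derives a uniform-in-$t$ bound
\[
\norm{\partial_t U(\cdot,t)}_{L^2(\Omega_1)^3}^2 + a(U(\cdot,t),U(\cdot,t)) \leq K,\qquad t\geq 0,
\]
with $K$ controlled by $\norm{F}^2_{L^2((0,T)\times\Omega_1)^3} + \norm{V_0}_{H^1(\Omega_1)^3}^2 + \norm{V_1}_{L^2(\Omega_1)^3}^2$ (the $H^1$-continuity of $a$ is used to bound $a(V_0,V_0)$).

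Next, Korn's inequality \eqref{qad} on $\Omega_1$ gives
\[
c_2\norm{U(\cdot,t)}_{H^1(\Omega_1)^3}^2 \leq a(U(\cdot,t),U(\cdot,t))+c_1\norm{U(\cdot,t)}_{L^2(\Omega_1)^3}^2,
\]
whose first right-hand term is already bounded by $K$. For the $L^2$ term I would write $U(\cdot,t)=V_0 + \int_0^t \partial_t U(\cdot,s)\,ds$ and use the uniform $L^2$ control of $\partial_t U$ from the previous step to get
\[
\norm{U(\cdot,t)}_{L^2(\Omega_1)^3}\leq \norm{V_0}_{L^2(\Omega_1)^3} + C\sqrt{K}\,t,
\]
which inserted into Korn's inequality yields the announced bound \eqref{p4a}.

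The main obstacle, and the source of the unavoidable factor $t+1$, is precisely the non-coercivity of $a$: because rigid motions lie in its kernel, uniform boundedness of $E(t)$ only controls the $a$-seminorm and not the full $H^1$ norm, so the missing $L^2$ piece has to be recovered by time-integration of $\partial_t U$, producing unavoidable linear growth. A minor but genuine technical point is that the energy identity must be justified at the regularity $\mathcal C^1([0,+\infty);L^2(\Omega_1))^3\cap \mathcal C([0,+\infty);H^1(\Omega_1))^3$ provided by the existence theorem, which is handled by the standard density/approximation mentioned above.
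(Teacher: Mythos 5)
Your proposal is correct and follows essentially the same route as the paper: the energy identity $J'(t)=2\int_{\Omega_1}F\cdot\partial_t U\,dx$ obtained via Betti's formula and the traction-free condition, a uniform-in-$t$ bound on the energy (the paper closes it by citing the a priori estimate $\norm{\partial_tU}_{L^\infty(0,T;L^2)}\leq C\norm{F}_{L^2}$ from \cite[Chapter 3]{LM1}, whereas you use the equivalent self-contained absorption argument on $\sqrt{E(t)}$), then writing $U(\cdot,t)=V_0+\int_0^t\partial_tU(\cdot,s)\,ds$ to get the linear-in-$t$ growth of the $L^2$ norm, and finally Korn's inequality to pass to $H^1$. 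Your diagnosis that the factor $t+1$ stems from the non-coercivity of $a$ (rigid motions in its kernel) matches the structure of the paper's argument, and the density/regularization step you invoke is exactly how the paper justifies the identity (first taking $F\in H^1_0(0,T;L^2(\Omega_1))^3$, then passing to the limit).
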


\begin{proof} Without lost of generality we may assume that $V_0=V_1=0$. Indeed the result with non-vanishing initial conditions can be carry out in a similar way.
Let us first assume that $F\in H^1_0(0,T;L^2(\Omega_1))^3$.
Repeating the arguments in the proof of Lemma \ref{l1}, we can prove that the regularity of $U$ can be improved to be
$$U\in \mathcal C^2([0,+\infty);L^2(\Omega_1))^3\cap \mathcal C([0,+\infty);H^2(\Omega_1))^3.$$
Now let us consider the energy
$$J(t):=\int_{\Omega_1} \rho|\partial_t U(x,t)|^2+\mathcal E(U(x,t),U(x,t))\;dx.$$
 For simplicity, we  assume  that $F$ takes values in $\R^3$ such that $U$ takes also values in $\R^3$, otherwise our arguments may be extended without any difficulty to function $F$ taking values in $\mathbb C^3$. It is clear that $J\in \mathcal C^1([0,+\infty))$ and
$$J'(t)=2\int_{\Omega_1} \rho\partial_t^2 U(x,t)\cdot \partial_t U(x,t) +\mathcal E(U(x,t),\partial_tU(x,t))\;dx.$$
Using the fact that $ \mathcal T U=0$ on $[0,+\infty)\times\partial\Omega_1$, we can integrate by parts to obtain (see (\ref{sesa}))
$$J'(t)=2\int_{\Omega_1} [\rho\partial_t^2 U(x,t)-\mathcal{L}_{\lambda,\mu} U(x,t)]\cdot \partial_t U(x,t)dx=2\int_{\Omega_1} F(x,t)\cdot \partial_tU(x,t).$$
Thus, using the fact that $\rho>0$ we get
\be \nonumber\int_{\Omega_1} |\partial_t U(x,t)|^2dx\leq \rho^{-1}J(t)&\leq& 2\rho^{-1}\int_0^t\abs{\int_{\Omega_1} F(x,s)\cdot \partial_tU(x,s)dx}ds\\ \nonumber
 &\leq& 2\rho^{-1}\int_0^T\int_{\Omega_1}|F(x,s)|\;|\partial_tU(x,s)|dxds\\ \label{eq:U}
&\leq& 2\rho^{-1} T^{\frac{1}{2}}\norm{F}_{L^2((0,T)\times\Omega_1)^3}\norm{\partial_tU}_{L^\infty(0,T;L^2(\Omega_1))^3}.
\en
Combining this with a classical estimate of $\norm{\partial_tU}_{L^\infty(0,T;L^2(\Omega_1))^3}$ (e.g. \cite[Formula (8.15), Chapter 3]{LM1}), we deduce that
$$\norm{\partial_tU}_{L^\infty(0,T;L^2(\Omega_1))^3}\leq \tilde{C}\norm{F}_{L^2((0,T)\times\Omega_1)^3}$$
with $\tilde{C}$ depending only on $T$, $\Omega_1$, $\rho$, $\mathcal{L}_{\lambda,\mu}$. It then follows from (\ref{eq:U})  that
$$\int_{\Omega_1} |\partial_t U(x,t)|^2dx\leq \tilde{C}\norm{F}_{L^2((0,T)\times\Omega_1)^3}^2.$$
Combining this with the fact that
$$U(t,\cdot)=\int_0^t\partial_t U(\cdot,s)ds,$$
we obtain that
\be\nonumber
\norm{U(\cdot,t)}_{L^2(\Omega_1)^3}&=&\norm{\int_0^t\partial_t U(\cdot,s)ds}_{L^2(\Omega_1)^3}\leq \int_0^t\norm{\partial_t U(\cdot,s)}_{L^2(\Omega_1)^3}ds\\ \label{eq:U1}
&\leq& \tilde{C}\,\norm{F}_{L^2((0,T)\times\Omega_1)^3}t.
\en
 By density, we can extend this estimate  to $F\in L^2((0,T_1)\times\Omega_1)^3$.

 Applying Korn's inequality gives the estimate
\be\nonumber
\norm{U(\cdot,t)}_{H^1(\Omega_1)^3}^2&\leq& \tilde{C}\left(\norm{U(\cdot,t)}_{L^2(\Omega_1)^3}^2+\int_{\Omega_1} \mathcal E(U(x,t),U(x,t))dx\right)\\ \label{EU}
&\leq& \tilde{C}(\norm{U(\cdot,t)}_{L^2(\Omega_1)^3}^2+J(t)).
\en
Multiplying $U$ to both sides of (\ref{eaq1}) and  integrating by part with respect to $x$ over $\Omega_1$, we can estimate $J(t)$ by
\be\label{EJ}
J(t)=\int_{\Omega_1} F(x,t)\,\cdot U(x,t)\,dx\leq ||F(\cdot,t)||_{L^2(\Omega_1)^3}||U(\cdot,t)||_{L^2(\Omega_1)^3}.
\en
Now, inserting (\ref{EJ}) into (\ref{EU}) and making use of (\ref{eq:U1}), we finally obtain
\ben
\norm{U(\cdot,t)}_{H^1(\Omega_1)^3}^2\leq \tilde{C}\,\norm{F}_{L^2((0,T)\times\Omega_1)^3}^2(1+t^2),
\enn
which proves (\ref{p4a}).
\end{proof}

\section*{Acknowledgement}
 The work of G. Hu is supported by the NSFC grant (No. 11671028) and NSAF grant (No. U1530401).  The work of Y. Kian is supported by  the French National
Research Agency ANR (project MultiOnde) grant ANR-17-CE40-0029. The  authors would like to
thank Yikan Liu for his remarks that allow to improve Theorem 3. The second author is grateful to Lauri Oksanen for his remarks about the global Holmgren theorem for Lam\'e system that allows to improve our results related to (IP1). The second author would like
to thank the Beijing Computational Science Research Center, where part of this article was written, for its
kind hospitality

\end{document}